\newtheorem{proto-definition}[theorem]{Proto-Definition}
\newtheorem{pseudo-definition}[theorem]{Pseudo-Definition}
\newtheorem{definition-lemma}[theorem]{Definition/Lemma}
\newtheorem{definition-explanation}[theorem]{Definition/Explanation}
\newtheorem{explanation-definition}[theorem]{Explanation/Definition}
\newtheorem{definition-fact}[theorem]{Definition/Fact}
\newtheorem{definition-notation}[theorem]{Definition/Notation}
\newtheorem{definition-conjecture}[theorem]{Definition/Conjecture}
\newtheorem{definition-theorem}[theorem]{Definition/Theorem}
\newtheorem{lemma-definition}[theorem]{Lemma/Definition}
\newtheorem{remark-notation}[theorem]{\it Remark/Notation}
\newtheorem{application-lemma}[theorem]{Application/Lemma}
\newtheorem{example-definition}[theorem]{Example/Definition}
\newtheorem{definition-prototype}[theorem]{Definition-Prototype}
\numberwithin{equation}{subsection}
\newtheorem{stheorem}{Theorem}[section]
\newtheorem{sdefinition}[stheorem]{Definition}
\newtheorem{sproto-definition}[stheorem]{Proto-Definition}
\newtheorem{spseudo-definition}[stheorem]{Pseudo-Definition}
\newtheorem{sdefinition-lemma}[stheorem]{Definition/Lemma}
\newtheorem{sdefinition-explanation}[stheorem]{Definition/Explanation}
\newtheorem{sexplanation-definition}[stheorem]{Explanation/Definition}
\newtheorem{sdefinition-fact}[stheorem]{Definition/Fact}
\newtheorem{sdefinition-notation}[stheorem]{Definition/Notation}
\newtheorem{sdefinition-conjecture}[stheorem]{Definition/Conjecture}
\newtheorem{sdefinition-theorem}[stheorem]{Definition/Theorem}
\newtheorem{slemma}[stheorem]{Lemma}
\newtheorem{slemma-definition}[stheorem]{Lemma/Definition}
\newtheorem{sremark}[stheorem]{\it Remark}
\newtheorem{sremark-notation}[stheorem]{\it Remark/Notation}
\newtheorem{sapplication-lemma}[stheorem]{Application/Lemma}
\newtheorem{sexample}[stheorem]{Example}
\newtheorem{sexample-definition}[stheorem]{Example/Definition}
\newtheorem{sdefinition-prototype}[stheorem]{Definition-Prototype}
\newtheorem{ssproto-definition}[sstheorem]{Proto-Definition}
\newtheorem{sspseudo-definition}[sstheorem]{Pseudo-Definition}
\newtheorem{ssdefinition-lemma}[sstheorem]{Definition/Lemma}
\newtheorem{ssdefinition-explanation}[sstheorem]{Definition/Explanation}
\newtheorem{ssexplanation-definition}[sstheorem]{Explanation/Definition}
\newtheorem{ssdefinition-fact}[sstheorem]{Definition/Fact}
\newtheorem{ssdefinition-notation}[sstheorem]{Definition/Notation}
\newtheorem{ssdefinition-conjecture}[sstheorem]{Definition/Conjecture}
\newtheorem{ssdefinition-theorem}[sstheorem]{Definition/Theorem}
\newtheorem{sslemma-definition}[sstheorem]{Lemma/Definition}
\newtheorem{ssremark-notation}[sstheorem]{\it Remark/Notation}
\newtheorem{ssapplication-lemma}[sstheorem]{Application/Lemma}
\newtheorem{ssexample-definition}[sstheorem]{Example/Definition}
\newtheorem{ssdefinition-prototype}[sstheorem]{Definition-Prototype}
\newcommand{\End}{\mbox{\it End}\,}
\newcommand{\Endsheaf}{\mbox{\it ${\cal E}\!$nd}\,}
\newcommand{\Hom}{\mbox{\it Hom}\,}
\newcommand{\Id}{\mbox{\it Id}\,}
\newcommand{\Image}{\mbox{\it Im}\,}
\newcommand{\Supp}{\mbox{\it Supp}\,}
\newcommand{\determinant}{\mbox{\it det}\,}
\newcommand{\pr}{\mbox{\it pr}}
\newcommand{\redscriptsize}{\mbox{\scriptsize\rm red}\,}
\newcommand{\supremum}{\mbox{\it sup}\,}
\newcommand{\longrightaarrow}{\longrightarrow\hspace{-3ex}\longrightarrow}
\begin{document}

\enlargethispage{24cm}

\begin{titlepage}

$ $

\vspace{-1.5cm} 

\noindent\hspace{-1cm}
\parbox{6cm}{\small December 2014}\
   \hspace{7cm}\
   \parbox[t]{6cm}{yymm.nnnnn [math.SG] \\
                D(12.1): finite algebraicness    
				}

\vspace{5em}

\centerline{\large\bf
  D-branes and synthetic/$C^{\infty}$-algebraic symplectic/calibrated geometry,}
\vspace{1ex}
\centerline{\large\bf
  I: Lemma on a finite algebraicness property of smooth maps}
 \vspace{1ex}
 \centerline{\large\bf
 from Azumaya/matrix manifolds}

\bigskip

\vspace{3em}

\centerline{\large
  Chien-Hao Liu    
            \hspace{1ex} and \hspace{1ex}
  Shing-Tung Yau
}

\vspace{5em}

\begin{quotation}
\centerline{\bf Abstract}

\vspace{0.3cm}

\baselineskip 12pt  
{\small
 We lay down an elementary yet fundamental lemma concerning 
  a finite algebraicness property of a smooth map from an Azumaya/matrix manifold with a fundamental
  module to a smooth manifold.
 This gives us a starting point to build a synthetic  (synonymously, $C^{\infty}$-algebraic)
    symplectic geometry and calibrated geometry that are both tailored to and guided by
	D-brane phenomena in string theory and along the line of our previous works
 D(11.1) (arXiv:1406.0929 [math.DG]) and D(11.2) (arXiv:1412.0771 [hep-th]).	
 } 
\end{quotation}

\vspace{18em}

\baselineskip 12pt
{\footnotesize
\noindent
{\bf Key words:} \parbox[t]{14cm}{D-brane;
      Azumaya manifold, matrix manifold; smooth map; $C^{\infty}$-scheme, Weil algebra; \\
	  near-point determined; Lagrangian submanifold with nilpotent structure
 }} 

 \bigskip

\noindent {\small MSC number 2010: 51K10, 58A40, 53D12; 14A22, 16S50, 81T30
} 

\bigskip

\baselineskip 10pt
{\scriptsize
\noindent{\bf Acknowledgements.}
We thank
 Bei Jia, Hai Lin, Shahin M.M.\ Sheikh-Jabbari
   for sharing with us their insights on D-branes;
 Cumrun Vafa
   for lectures that influenced our understanding.
C.-H.L.\ thanks in addition
 Siu-Cheong Lau
   for discussions on fuzzy Lagrangian submanifolds, fall 2014,
         and the topic courses in SYZ mirror symmetry and symplectic geometry,
              spring 2013, spring 2014, spring 2015,
                that tremendously influenced and updated his understanding;
 Sema Salur
   for discussions on special Lagrangian submanifolds, spring 2015;
 Yng-Ing Lee, Katrin Wehrheim
   for related discussions during the brewing years;
 Gregory Moore, Cumrun Vafa
   for consultations on other expanded directions;
 Tristan Collins, Daniel Jafferis, Andrew Strominger
   for other topic courses, spring 2015;
 Ling-Miao Chou
   for moral support and comments that improved the illustrations.
The project is supported by NSF grants DMS-9803347 and DMS-0074329.
} 

\end{titlepage}

\newpage

\begin{titlepage}

$ $

\vspace{12em}

\centerline{\small\it
 Chien-Hao Liu dedicates this note D(12.1) and D(12.2) (to be completed)}
\centerline{\small\it
 to the loving memory of}
\centerline{\small\it
 Rev.\ \& Mrs.\ R.\ Campbell Willman (1925-2014) and Barbara M.\ Willman (1927-1999).}

\vspace{30em}

\baselineskip 11pt

{\footnotesize
\noindent
(From C.H.L.)
 There are memories that are too abundant to condense and too cherished and personal to reveal. 
 A seemingly accidental encounter in my teenage years, which turned out to have profound impact on me.    
 There is not a word with which I can express my gratitude to this family, including Ann and Lisa.
 } 

\end{titlepage}


\newpage
$ $

\vspace{-3em}

\centerline{\sc
 D-Brane and SG/CG I: Lemma on Finite Algebraicness Property
 } %

\vspace{2em}


\begin{flushleft}
{\Large\bf 0. Introduction and outline}
\end{flushleft}
Lagrangian submanifolds in a symplectic manifold or special Lagrangian submanifolds in a Calabi-Yau manifolds
 have deep applications in string theory\footnote{This
                                                                       is a huge topic.
																	  Unfamiliar readers are referred to
																	    [B-B-S] and [O-O-Y] on how they are related to supersymmetric D-branes;
																	  for example,
																	    [A-B-C-D-G-K-M-S-S-W] and [H-K-K-P-T-V-V-Z]
																		for an exposition related to mirror symmetry;
																	  and key-word search for many related themes and works.																	
														      }.
They are related to D-branes that preserve some supersymmetry
 (either on the open-string world-sheet or the target space(-time)).	
On the other hand,
 study of D-branes as a fundamental dynamical object in string theory leads us to the notion of
 differentiable maps from an Azumaya manifold (or synonymously matrix manifold)
 with a fundamental module to a real manifold.
It is thus very natural to anticipate a notion of `{\it Lagrangian map}' or a `{\it special Lagrangian map}'
 from an Azumaya/matrix manifold with a fundamental module to a symplectic manifold or a Calabi-Yau manifold;
cf.\ [L-Y3: Sec.\ 6.3, Sec.\ 7.2] (D(11.1)).
However,
 the image of a differentiable map from an Azumaya/matrix manifold with a fundamental module
  in general carries a nilpotent structure with the push-forward of the fundamental module
  supported thereupon (cf.\ [L-Y3: Sec.\ 3, Sec.\ 5.2] (D(11.1)).)
Such fuzzy sub-objects in a smooth manifold can be described/understood
 in terms of synthetic/$C^{\infty}$-algebraic geometry;
  cf., for example, [Du1], [Du2], [Joy1], [Ko], and [M-R].
 (See [L-Y3: References] (D(11.1)) for more related literatures.)
This suggests to relook at symplectic geometry and calibrated geometry
 with some input from synthetic/$C^{\infty}$-algebraic geometry:
 
\begin{center}
\includegraphics[width=0.80\textwidth]{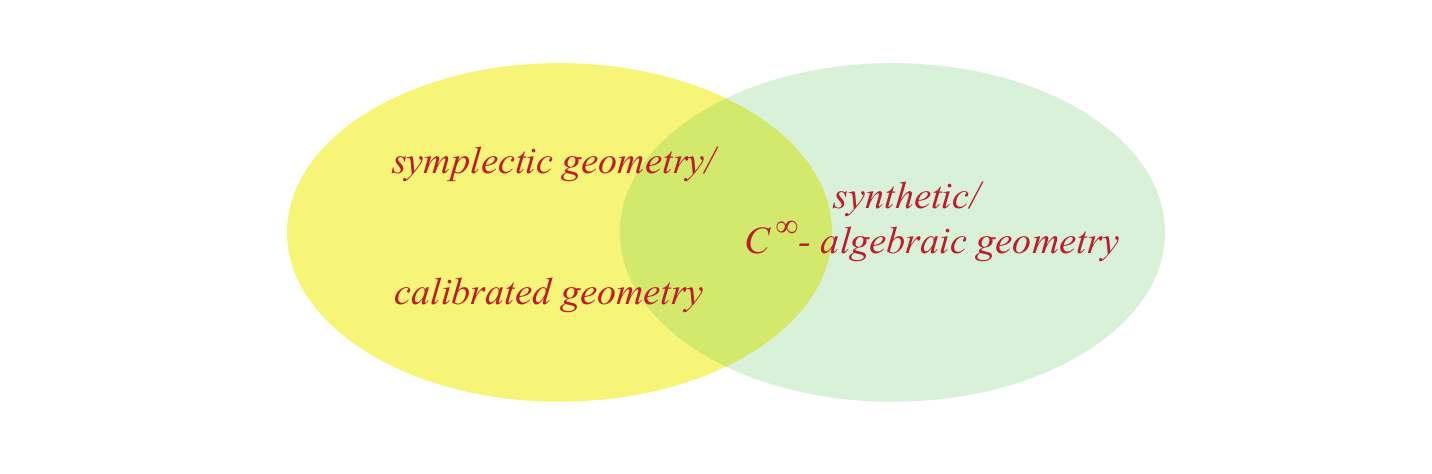}
\end{center}
   	
In this subseries D(12) of our D-project,
 we explore this new direction in symplectic geometry and calibrated geometry
 that is motivated by D-branes in string theory and the recent progress made in [L-Y3] (D(11.1)).
To begin, after some necessary background in Sec.\ 1 and Sec.\ 2,
we prove in this Note D(12.1)
  an elementary yet fundamental lemma (Sec.\ 3: Lemma~3.1)
  concerning
  a finite algebraicness property of a smooth map from an Azumaya/matrix manifold with a fundamental
  module to a smooth manifold.
It is a measure of what more to add to the traditional symplectic geometry and calibrated geometry
 in order to fit D-branes into them.
This serves as our starting point to build
 a synthetic  (synonymously, $C^{\infty}$-algebraic)
    symplectic geometry and calibrated geometry
 that are both tailored to and guided by
	D-brane phenomena in string theory and along the line of our previous works
 D(11.1) (arXiv:1406.0929 [math.DG]) and D(11.2) (arXiv:1412.0771 [hep-th]).

\bigskip
 
\bigskip

\noindent
{\bf Convention.}
 References for standard notations, terminology, operations and facts are
  (1) algebraic geometry: [Hart];
  (2) symplectic geometry: [McD-S];
  (3) calibrated geometry:  [Harv], [H-L], [McL];
  (4) synthetic geometry and $C^{\infty}$-algebraic geometry:
              [Joy1], [Ko]; and
  (5) D-branes: [Joh], [Po2], [Pol3].
 \begin{itemize}	
  \item[$\cdot$]	
   The inclusion `${\Bbb R}\hookrightarrow{\Bbb C}$' is referred to the {\it field extension
    of ${\Bbb R}$ to ${\Bbb C}$} by adding $\sqrt{-1}$, unless otherwise noted.
   
  \item[$\cdot$]
   All manifolds are paracompact, Hausdorff, and admitting a (locally finite) partition of unity.
   We adopt the {\it index convention for tensors} from differential geometry.
    In particular, the tuple coordinate functions on an $n$-manifold is denoted by, for example,
    $(y^1,\,\cdots\,y^n)$.
   However, no up-low index summation convention is used.
   
  \item[$\cdot$]
   The current Note D(12.1) spins off from the main line of D(11)-subseries in progress.
   It continues and focuses on themes in
	  \begin{itemize}
	   \item[]  \hspace{-2em} [L-Y3]\hspace{1em}\parbox[t]{34em}{{\it
    	   D-branes and Azumaya/matrix noncommutative differential geometry,
        I: D-branes as fundamental objects in string theory  and differentiable maps
         from Azumaya/matrix manifolds with a fundamental module to real manifolds},
         arXiv:1406.0929 [math.DG]. (D(11.1))
		 }
	  \end{itemize}  	
    that are related to symplectic geometry and calibrated geometry,
	cf.\ [L-Y3: Sec.\ 6.3, Sec.\ 7.2] (D(11.1)).	
   Notations and conventions follow ibidem.
 \end{itemize}

\bigskip

\bigskip
   
\begin{flushleft}
{\bf Outline}
\end{flushleft}
\nopagebreak
{\small
\baselineskip 12pt  
\begin{itemize}
 \item[0.]
  Introduction.
   
 \item[1.]
 D-branes, $C^{\infty}$-algebraic geometry,
  and differentiable maps from an Azumaya/matrix manifold\\ with a fundamental module
  \vspace{-.6ex}
  \begin{itemize}
   \item[$\cdot$]
    D-branes as a fundamental dynamical object in string theory
	
   \item[$\cdot$]	
    Azumaya/matrix manifolds with a fundamental module and their surrogates
	
   \item[$\cdot$]	
	Smooth maps from $(X^{\!A\!z}; {\cal E})$ to $Y$, push-forward, associated surrogate, and graph
  \end{itemize}
 
 \item[2.]
 Weil algebras and determinacy of $C^{\infty}$-rings
  \vspace{-.6ex}
  \begin{itemize}
   \item[$\cdot$]
    Weil algebras
    	
   \item[$\cdot$]	
    Determinacy of $C^{\infty}$-rings
  \end{itemize}
  
 \item[3.]
 Lemma on a finite algebraicness property of smooth maps
  $\varphi:(X^{\!A\!z},{\cal E})\rightarrow Y$
 \vspace{-.6ex}
 \begin{itemize}
  \item[$\cdot$]
   Lemma on a finite algebraicness property of smooth maps
     $\varphi:(X^{\!A\!z},{\cal E})\rightarrow Y$
	
  \item[$\cdot$]
   A word on synthetic/$C^{\infty}$-algebraic symplectic/calibrated geometry
 \end{itemize}

 %
 %
 %
 
\end{itemize}
} 

\newpage

\section{D-branes, $C^{\infty}$-algebraic geometry,
                    and differentiable maps from an Azumaya/matrix manifold with a fundamental module}

{To} introduce the terminology and notations and
  to make this D(12) subseries conceptually more self-contained,
 we review in this section the most relevant part of [L-Y3] (D(11.1)).
The setting is guided by the following three questions:
 \begin{itemize}
  \item[] {\bf Q1.}
   {\it What is a D-brane as a fundamental dynamical object in string theory?}
   
  \item[] {\bf Q2.}
  {\it How would one understand an Azumaya/matrix manifold with a fundamental module?}
  
  \item[] {\bf Q3.} \parbox[t]{14cm}{\it
    How would one define a notion of a `differentiable map' from such an ``enhanced" manifold
  (with some open string-induced structure thereupon) to an ordinary real manifold
    in such a way that reflects features of D-branes? \\
   How would one understand such maps? }
 \end{itemize}
We will address only the $C^{\infty}$-case in this note.
Readers are referred to ibidem for omitted details, the general $C^k$-case, and further references.
Newcomers are referred to [Liu] for a nontechnical introduction in the realm of algebraic geometry.

\bigskip

\begin{flushleft}
{\bf D-branes as a fundamental dynamical object in string theory}
\end{flushleft}
The structure of the enhanced scalar field in the massless spectrum on the world-volume of coincident D-branes
 that describes the deformations of the branes
  ([Wi] (1995), [H-W] (1996); see also [Po3: vol.\ I: Sec.\ 8.7] (1998) and  [G-S] (2000)),
 when re-examined through Grothendieck's setting of modern (commutative) algebraic geometry
  ([Hart]), leads one to the following proto-definition of D-branes
  (cf.\ [L-Y1] (D(1), 2007) and [L-L-S-Y] (D(2), 2008)
             in the more algebraic-geometry-oriented setting):
 
\bigskip

\begin{sproto-definition} {\bf [D-brane as fundamental/dynamical object].} {\rm
 As a fundamental/dynamical object in string theory,
  a {\it D-brane} in a space-time $Y$  is described by
   a differentiable map from an Azumaya/matrix manifold with a fundamental module
    (and other open-string-induced structures thereupon)
  to the real manifold $Y$.
}\end{sproto-definition}

\bigskip

\noindent
{\sc Figure}~1-1 and {\sc Figure}~1-3.
    %

 \bigskip
  
\begin{figure}[htbp]
 \bigskip
  \centering
  \includegraphics[width=0.80\textwidth]{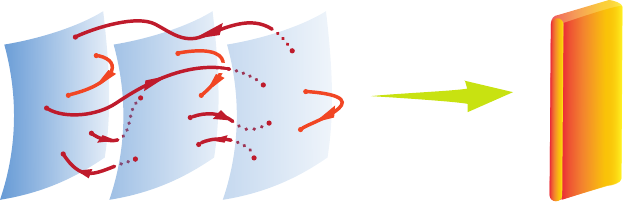}
  
  \bigskip
  \bigskip
 \centerline{\parbox{13cm}{\small\baselineskip 12pt
  {\sc Figure}~1-1.
  Oscillations of oriented open strings (red) that connect different D-branes (blue)
    give rise to an enhanced massless spectrum on D-brane world-volume
 	when these D-branes become  coincident.
  In particular, the enhanced massless scalar field leads to a noncommutative structure (orange)
   on the common D-brane world-volume of the coincident D-branes.
  At first, this enhanced scalar field is only (gauge) Lie-algebra-valued ([Wi], [Po2]);
  it can be promoted to be  associative matrix-algebra-valued ([H-W]).
 This latter aspect is favored from Grothendieck's viewpoint of algebraic geometry ([L-Y1] (D(1)))  
     and
   at the same time leads us to a picture of a D-brane as
	 a morphism/map from a matrix manifold with a fundamental module (i.e.\ the Chan-Paton bundle) 
	 (and other open-string-induced structures)
	to a  space-time ([L-Y3] (D(11.1))),
	when D-brane is taken as a fundamental dynamical object in string theory.
 This is exactly the same as that
  a fundamental string is mathematically a map from a string-world-sheet to a space-time.
  Cf.\ {\sc Figure}~1-3.
  }}
\end{figure}

\bigskip

Zooming into the details of this proto-definition  and
 focusing only on the most basic structures of D-branes
 lead one then to
 {\sl Question 2} and {\sl Question 3} above,
 whose answers based on $C^{\infty}$-algebraic geometry in the spirit of Grothendieck
 (e.g.\ [Joy1]) are reviewed below.

\bigskip

\begin{flushleft}
{\bf Azumaya/matrix manifolds with a fundamental module and their surrogates}
\end{flushleft}
\begin{sdefinition} {\bf [Azumaya/matrix $C^{\infty}$-manifold with a fundamental module].}
{\rm (cf.\ [L-Y3: Definition 4.0.1] (D(11.1)).)
 Let
   \begin{itemize}
    \item[$\cdot$]
     $X$ be a smooth manifold of dimension $m$,
         whose structure sheaf of smooth functions is denoted by ${\cal O}_X$;
	 denote the complexification ${\cal O}_X\otimes_{\Bbb R}{\Bbb C}$ of ${\cal O}_X$
	    by ${\cal O}_X^{\,\Bbb C}$
		with the built-in inclusion ${\cal O}_X\subset {\cal O}_X^{\,\Bbb C}$;
	
	\item[$\cdot$]	
     ${\cal E}$ be a locally free ${\cal O}_X^{\,\Bbb C}$-module of rank $r$,
	 (cf.\ the Chan-Paton sheaf on a D-brane);  and
		
    \item[$\cdot$]		
	 ${\cal O}_X^{A\!z}:= \Endsheaf_{{\cal O}_X^{\,\Bbb C}}({\cal E})$
        be the sheaf of endomorphisms of ${\cal E}$ as an ${\cal O}_X^{\,\Bbb C}$-module;
     by construction, there is a canonical sequence of inclusions
      $$
	    {\cal O}_X\; \subset \; {\cal O}_X^{\,\Bbb C}\; \subset\; {\cal O}_X^{A\!z}
	  $$	
	 and ${\cal O}_X^{A\!z}$ is a sheaf of ${\cal O}_X^{\,\Bbb C}$-algebras
	  with center the image of ${\cal O}_X^{\,\Bbb C}$ under the  inclusion. 		
   \end{itemize}
 The smooth manifold $X$ with the enhanced structure sheaf
   ${\cal O}_X^{A\!z}:= \Endsheaf_{{\cal O}_X^{\,\Bbb C}}({\cal E})$
    of noncommutative function-rings from the endomorphism algebras of ${\cal E}$
   is called a {\it  (complex-)Azumaya (real $m$-dimensional)smooth manifold} over $X$; 	
  in notation,
  $$
     X^{\!A\!z}\; \!:=\; (X, \Endsheaf_{{\cal O}_X^{\,\Bbb C}}({\cal E}))\,.
  $$
 The triple
   $$
     (X,{\cal O}_X^{A\!z}:=\Endsheaf_{{\cal O}_X^{\,\Bbb C}}({\cal E}),{\cal E})
   $$
  is called an {\it Azumaya smooth manifold with a fundamental module}.
 With respect to a local trivialization of ${\cal E}$,
  ${\cal O}_X^{A\!z}$  is a sheaf of $r\times r$-matrix algebras with entries
    complexified local $C^{\infty}$-functions on $X$.
 For that reason and to fit better with the terminology in quantum field theory and string theory,
  we shall call  	
   $(X,{\cal O}_X^{A\!z}:=\Endsheaf_{{\cal O}_X^{\,\Bbb C}}({\cal E}),{\cal E})$
   also as a {\it matrix $C^{\infty}$-manifold with a fundamental module},
   particularly in a context that is more directly related to quantum field theory and string theory.
}\end{sdefinition}

\bigskip

To help having a concrete, more visualizable feeling of such a {\it noncommutative} manifold,
 we introduced the following intermediate {\it commutative} objects:
 
\bigskip

\begin{sdefinition} {\bf [(commutative) surrogate of Azumaya/matrix manifold].} {\rm
 (Cf.\ [L-Y1: Sec.\ 3.2] (D(1)), [L-L-S-Y: Definition 2.1.3] (D(2)),
            [L-Y3: Definition 5.1.4; Lemma/Definition 5.3.1.7] (D(11.1)),
			[L-Y4: Definition 4.1.5; Lemma/Definition 4.2.1.5] (D(11.2)).)
 Let
   ${\cal A}$ be a sheaf of commutative ${\cal O}_X$-subalgebras of ${\cal O}_X^{A\!z}$
     and
   $X_{\cal A}$ be its associated $C^{\infty}$-scheme over $X$.
 Then $X_{\cal A}$  is called a ({\it commutative}) {\it surrogate} of $X^{\!A\!z}$.
}\end{sdefinition}

\smallskip

\begin{sremark} {$[\,$built-in diagram associated to surrogate$\,]$.} {\rm
 Through the built-in inclusions
   $$
     {\cal O}_X\;\subset {\cal A}\; \subset {\cal O}_X^{A\!z},
   $$
 one has the contravariant sequence of dominant morphisms:
   $$
     \xymatrix{
	   _{{\cal O}_X^{A\!z}}{\cal E} \ar@{.>}[d]
	      &  _{\cal A}{\cal E}\ar@{.>}[d]
		  &  {\cal E} \ar@{.>}[d]  \\
      X^{\!A\!z}\ar@{->>}[r]^-{\sigma_{\cal A}}
    	  & X_{\cal A}\ar@{->>}[r]^-{\pi_{\cal A}}   & X    &\hspace{-5em}.
	 }
   $$
  Here,
    $_{{\cal O}_X^{\,\Bbb C}}{\cal E}$ is ${\cal E}$
         but as a (left) ${\cal O }_X^{A\!z}$-module tautologically,  and similarly
	$_{\cal A}{\cal E}$ is ${\cal E}$ but as an ${\cal A}$-module tautologically.
  While
     $\sigma_{\cal A}$ is only defined conceptually
         by the inclusion ${\cal A}\subset {\cal O}_X^{A\!z}$,
   $\pi_{\cal A}$	is an honest map between $C^{\infty}$-schemes
     in the context of $C^{\infty}$-algebraic geometry.
  Furthermore,
    one has the following canonical isomorphisms of sheaves of modules on related ringed-spaces
   $$
    {\sigma_{\cal A}}_{\ast}(_{{\cal O}_X^{\,\Bbb C}}{\cal E})\;
	   \simeq\; _{\cal A}{\cal E}\,,
	    \hspace{2em}\mbox{and}\hspace{2em}
	  {\pi_{\cal A}}_{\ast}(_{\cal A}{\cal E})\;
         \simeq\; {\cal E}\,.	
   $$
}\end{sremark}

\bigskip

\begin{sexample}{\bf [surrogates of Azumaya/matrix closed string].} {\rm
 Examples of surrogates of the Azumaya/matrix closed string with a fundamental module
  $(S^{1, A\!z},{\cal E})$, where ${\cal E}\simeq {\cal O}_{S^1}^{\oplus 3}$,
  are illustrated in {\sc Figure }~1-2.
 %
 
 \bigskip
 
\begin{figure}[htbp]
 \bigskip
  \centering
  \includegraphics[width=0.80\textwidth]{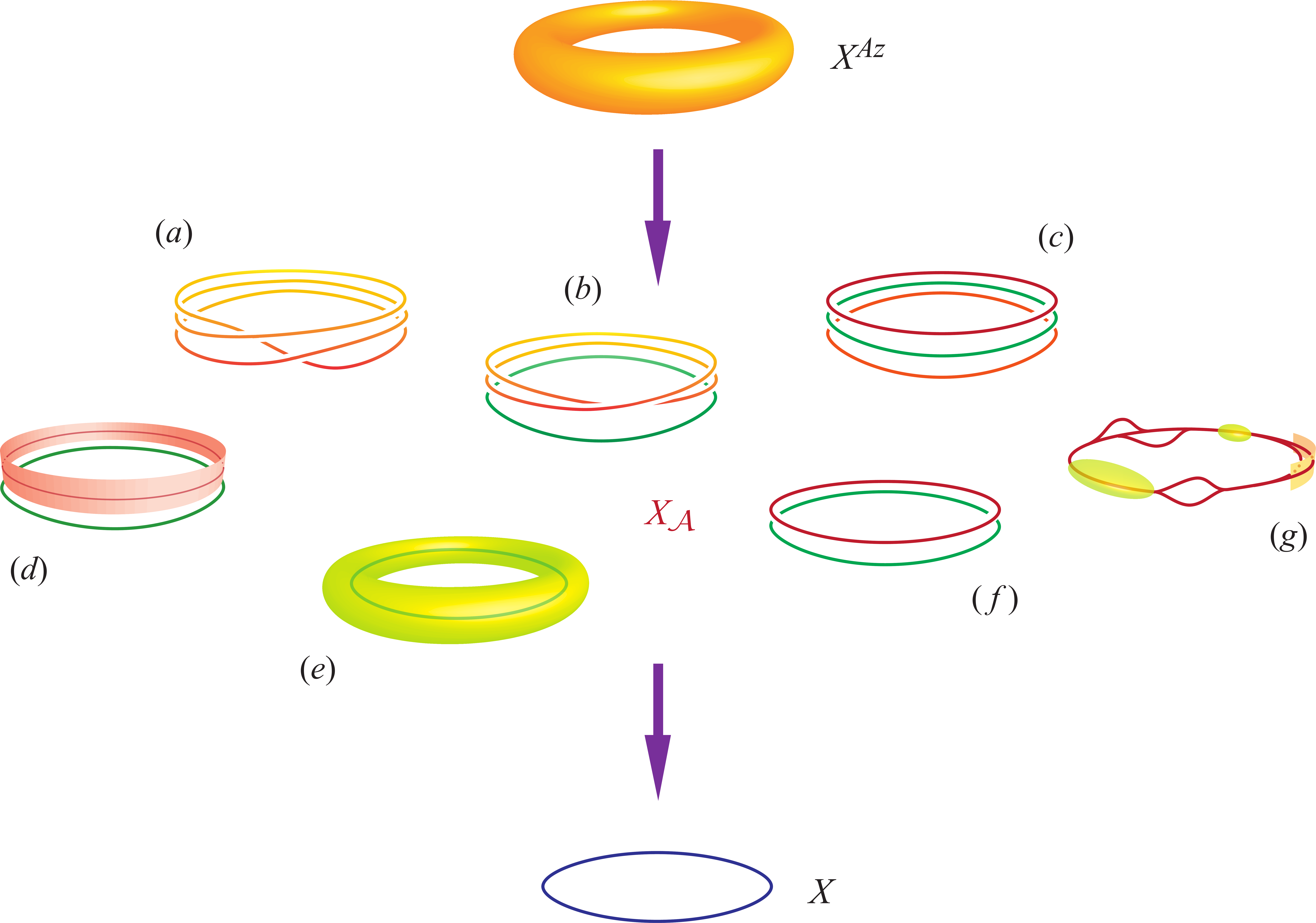}
  
  \bigskip
  \bigskip
 \centerline{\parbox{13cm}{\small\baselineskip 12pt
  {\sc Figure}~1-2.
  In the illustration,
    the Azumaya/matrix manifold $X^{\!A\!z}$ is indicated by a noncommutative cloud sitting over  $X$.
  In-between are surrogates $X_{\cal A}$ of $X^{\!A\!z}:=(X,{\cal O}_X^{A\!z})$
   associated to various commutative ${\cal O}_X$-subalgebras
     ${\cal O}_X\subset {\cal A}\subset {\cal O}_X^{A\!z}$.
  They can form a very rich pool of patterns/varieties.
  From the illustrated Examples $(a)$, $(b)$, $(c)$, $(d)$, $(e)$, $(f)$, $(g)$
    of surrogates of Azumaya/matrix closed string $(S^{1,A\!z},{\cal E})$ of rank $3$,
   one sees, for instance,
    \begin{itemize}
	 \item[$\cdot$]
      a {\it long string} as a connected component (cf.\ $(a)$, $(b)$)\\
	  vs.\ a set of {\it short strings} (cf.\ $(c)$, $(d)$, $(e)$, $(f)$ and $S^1$ itself),
	 \item[$\cdot$]
	  simple strings (cf.\ $(a)$, $(b)$, $(c)$, $(f)$ and $S^1$ itself)\\
	  vs.\  strings with a fuzzy nilpotent cloud (cf.\ $(d)$, $(e)$, $(g)$),
     \item[$\cdot$]
      connected strings  (cf.\ $(a)$, $(e)$, $(g)$, and $S^1$ itself)\\
	  vs.\ disconnected strings ($(b)$, $(c)$, $(d)$, $(f)$),
	\end{itemize}
  Such feature will propagate to the rich pool of patterns/varieties of the image of $X^{\!A\!z}$
   under smooth maps $\varphi$ therefrom;
   cf.\ Definition~1.6 and
   {\sc Figure}~1-3, {\sc Figure}~1-4, and {\sc Figure}~3-1.
  The vertical arrows
    $$
	   X^{\!A\!z}\longrightarrow X_{\cal A} \hspace{2em}\mbox{and}\hspace{2em}
	   X_{\cal A}\longrightarrow X
	$$
	are the built-in maps associated to the inclusions
	${\cal O}_X\subset {\cal A}\subset {\cal O}_X^{A\!z}$.
  In the list of examples,
    $X_{\cal A}$ in Example $(a)$ -- Example $(f)$	behaves as a bundle over $X$.
  One shoud not be misled by this.
  For a general ${\cal A}$,  $X$ is stratified into pieces and
   $X_{\cal A}$ can be of different natures over different strata;
   cf.\ Example $(g)$.
  }}
\end{figure}

\bigskip
}\end{sexample}

\bigskip

\begin{flushleft}
{\bf Smooth maps from $(X^{\!A\!z},{\cal E})$ to $Y$, push-forward, associated surrogate, and graph}
\end{flushleft}
Continuing the notations in the previous theme.
When attempting to define the notion of a `differentiable map'
 $\varphi: (X^{\!A\!z},{\cal E})\rightarrow Y$,
 there are a few subtle issues one has to face:
 \begin{itemize}
  \item[(1)]
   The first issue is a {\it fundamental} one.
   Recall that in algebraic geometry,
      an ${\Bbb R}$-scheme and a ${\Bbb C}$-scheme are quite different;
	  the former contains two types of closed points, ${\Bbb R}$-points and ${\Bbb C}$-points,
	  while the latter contains only ${\Bbb C}$-points.
	Thus an ${\Bbb R}$-scheme has a locus, i.e.\ the set of ${\Bbb R}$-points,
     	that is familiar to differential geometers
	 but also a generally much larger locus, i.e.\ the set of ${\Bbb C}$-points,
	 which may look troublesome from the aspect of differential geometry since these points are somehow ``extra".
    In contrast, in differential geometry a complex manifold is simply
	  a real manifold of even dimension with some additional structure
	  and all the points are already there.
	As we are dealing with maps from a space $X$ with a complex noncommutative structure sheaf
	  ${\cal O}_X^{A\!z}$ to a real manifold $Y$.
    \begin{itemize}
	  \item[{\bf Q.}] {\it
        Will it be that we need to add additional ${\Bbb C}$-points to $Y$ in order to make sense of
        a differentiable map $\varphi:(X^{\!A\!z},{\cal E})\rightarrow Y$
		in the context of $C^k$-rings and $C^k$-algebraic geometry,
		for $k\in {\Bbb Z}_{\ge 0}\cup\{\infty\}$?}
    \end{itemize}	
   Luckily, it turns out that the $C^k$-ring structure on the ring $C^k(Y)$ of $C^k$-functions on $Y$
    enforces the eigenvalues of the matrices
	 $\varphi^{\sharp}(C^k(Y))\subset C^k({\cal O}_X^{A\!z})$
    all real ([L-Y3: Sec.\ 3] (D(11.1))).
   Here, $\varphi^{\sharp}: C^k(Y)\rightarrow C^k({\cal O}_X^{A\!z})$
     is a would-be ring-homomorphism defined through `pull back the functions via $\varphi$',
	if $\varphi$ is defined.
   Thus, $Y$ is enough:
	\begin{itemize}
	 \item[{\bf A.}]    {\it
      There is no need to add additional ${\Bbb C}$-points to the real manifold $Y$
        in order to make sense of a ``morphism" in the current context.}
    \end{itemize}	
	
  \item[(2)]	
   The second issue is a {\it technical} one:
	 \begin{itemize}
	   \item[$\cdot$]
     {\it A general ideal $I$ of the underlying ring of a $C^k$-ring $R$ may not be a good object
	          from the aspect of $C^k$-algebraic geometry:
			 The $C^k$-ring structure on $R$ may not descend to a $C^k$-ring structure on the quotient ring $R/I$.}
     \end{itemize}			
    This simply says that we have to look at the ``right class" of ideals of a $C^k$-rings, i.e.\ 
      the ideals that really reflect the nature of ideals for a $C^k$-submanifold of a $C^k$-manifold.
     This leads to the notion of {\it $C^k$-normal $C^k$-ideals} of a $C^k$-ring;
      cf.\ [L-Y3: Definition~2.1.3, Definition~2.1.4] (D(11.1)).	
	For the $C^{\infty}$ case, things get easier:
	  \begin{itemize}
	   \item[$\cdot$] {\it
	    As a consequence of Hadamard's Lemma,
        a quotient ring $R/I$ of a $C^{\infty}$-ring $R$ by an ideal of the undering ring
		  is equipped with an induced $C^{\infty}$-ring structure
         such that the quotient ring-homomorphism is a $C^{\infty}$-ring-homomorphism.}
     \end{itemize}
   (E.g.\ [Joy1: Sec.\ 2.2].)	
	
  \item[(3)]
   The third issue is a {\it string-theoretical} one:
    \begin{itemize}
	 \item[{\bf Q}] {\it
	  How should one ``design" the notion of `differentiable maps' in the current context
	  so that it reflects fundamental features of D-branes in string theory?}	
	\end{itemize}
	For the purpose of this D-project, this is of uttermost importance.
	Purely mathematically, we are dealing with the notion of `morphisms' between two ringed-spaces:
	 $(X,{\cal O}_X^{A\!z})$ to $(Y,{\cal O}_Y)$.
	As already elaborated extensively in [L-Y1] (D(1)),
	  while mathematically sensible and acceptable,
	 it is too restrictive to define the notion of a map
	    $(X,{\cal O}_X^{A\!z})\rightarrow (Y,{\cal O}_Y)$
     by the standard notion of morphisms between ringed-spaces as a pair
	 $(f,f^{\sharp})$ where $f:X\rightarrow Y$ is an usual differentiable map between $C^k$-manifolds
	  and $f^{\sharp}:{\cal O}_Y\rightarrow f_{\ast}{\cal O}_X^{A\!z}$
	  is a map between sheaves of rings on $Y$;  cf.\ , e.g.\ [Hart: Sec.\ II.2].
    Rather, 
	 \begin{itemize}
	  \item[{\bf A.}]
  	   To encode key features of D-branes,
	    a map $\varphi:(X,{\cal O}_X^{A\!z})\rightarrow (Y,{\cal O}_Y)$
		 is  {\it defined contravariantly by an equivalence class}
		 $\varphi^{\sharp}:{\cal O}_Y\rightarrow {\cal O}_X^{A\!z}$
		{\it of gluing-systems of ring-homomorphisms from local fuinction-rings of $Y$
		  to local function-rings of $X^{\!A\!z}$ that satisfy some conditions}.
	 \end{itemize}
 \end{itemize}
 
Once these subtle issues are all passed,
   there is a conceptual ease here (though not necessarily a technical ease):
 \begin{itemize}
  \item[$\cdot$]
  A smooth manifold $M$  is affine in the context of $C^{\infty}$-algebraic geometry
   in the sense that $M$ is completely characterized by its function-ring $C^{\infty}(M)$.
 Thus, the notion of `morphism' in the current context can be phrased in terms of
  either the structure sheaves involved (${\cal O}_X^{A\!z}$ and ${\cal O}_Y$)
   or the (global) function-rings involved ($C^{\infty}({\cal O}_X^{A\!z})$
   and $C^{\infty}(Y)$).
 \end{itemize}
We will use the structure-sheaf picture to match with the setting in the algebro-geometric situation
  in [L-Y1] (D(1)) and [L-L-S-Y] (D(2)).
This is just a personal aesthetic preference.

\bigskip

\begin{sdefinition} {\bf [smooth map from Azumaya/matrix manifold].}  {\rm
 (Cf.\ [L-Y1: Sec.\ 1] (D(1)), [L-L-S-Y: Sec.\ 2.1] (D(2)),
             [L-Y3: Sec.\ 5.3] (D(11.1)), and [L-Y4: Sec.\ 4.2.1] (D(11.2)).)
  A {\it smooth map} (or synonymously, {\it infinitely differentiable map} or {\it $C^{\infty}$-map})
    $$
	   \varphi\; :\;
	     (X^{\!A\!z},{\cal E})\; :=\;
 		 (X,{\cal O}_X^{A\!z}:=\Endsheaf_{{\cal O}_X^{\,\Bbb C}}({\cal E}),
		           {\cal E})\;
				   \longrightarrow\;  (Y,{\cal O}_Y)
	$$
	from a smooth Azumaya/matrix manifold with a fundamental module $(X^{\!A\!z},{\cal E})$
    	to a smooth manifold $Y$
     is {\it defined contravariantly by}
	 an {\it equivalence class of gluing-systems of ring-homomorphisms
	         (over ${\Bbb R}\subset {\Bbb C}$)
	   from local function-rings of $Y$ to local function-rings of $X^{\!A\!z}$}
	$$
     \xymatrix{	
	   {\cal O}_X^{A\!z} &&& {\cal O}_Y\ar[lll]_-{\varphi^{\sharp}}
	  }
	$$
	such that
	 \begin{itemize}	
	   \item[(1)]
	     It extends to a commutative diagram
		  $$
		    \xymatrix{
			  {\cal O}_X^{A\!z}
			      &&& {\cal O}_Y \ar[lll]_-{\varphi^{\sharp}}
				                                      \ar@{_{(}->}^-{pr_Y^{\sharp}}[d]   \\			    
			    \rule{0ex}{1em}{\cal O}_X \ar@{^{(}->}[u]
				                                                                 \ar@{^{(}->}[rrr]_-{pr_X^{\sharp}}
				   &&& {\cal O}_{X\times Y} \ar[lllu]_-{\tilde{\varphi}^{\sharp}}		
			}
		  $$
		  of equivalence classes of gluing systems of ring-homomorphisms
		  (over ${\Bbb R}$, or ${\Bbb R}\subset {\Bbb C}$ whenever applicable).
        Here,
	     $\pr_X^{\sharp}$ and $\pr_Y^{\sharp}$ are the pull-back maps
		 associated to the projection maps
		   $\pr_X:X\times Y\rightarrow X$ and $\pr_Y:X\times Y\rightarrow Y$
		   respectively.
     
 	   \item[(2)]
	    In terms of the diagram in Item (1),
		 let the ${\cal O}_X$-algebra
		 $$
		  {\cal O}_X\langle\Image(\varphi^{\sharp})\rangle\;
		 	 :=\;  \Image(\tilde{\varphi}^{\sharp}) \; \subset\; {\cal O}_X^{A\!z}
         $$		
	 	  be equipped with a sheaf (over $X$) of $C^{\infty}$-rings structure
          induced from that of ${\cal O}_{X\times Y}$
		  via the quotient map $\tilde{\varphi}^{\sharp}$.
        Then, it is required that,
		  as maps to ${\cal O}_X\langle \Image(\varphi^{\sharp})\rangle$,
		  both arrows of the following subdiagram
		 $$
   	      \xymatrix{
			  {\cal O}_X\langle\Image(\varphi^{\sharp})\rangle
			      &&& {\cal O}_Y \ar[lll]_-{\varphi^{\sharp}} \\			
			    \rule{0ex}{1em}{\cal O}_X \ar@{^{(}->}[u]  				
		   }
		 $$
		 of the previous $4$-cornored diagram are now equavalence classes
		 of gluing systems of $C^{\infty}$-ring-homomorphisms.
    \end{itemize}		
     
 Since ${\cal O}_X\langle\Image(\varphi^{\sharp}) \rangle$ is commutative,
   it determines a $C^{\infty}$-scheme $X_{\varphi}$,
      with structure sheaf ${\cal O}_X\langle\Image(\varphi^{\sharp})\rangle$,
  that fits into the following commutative diagram of morphisms between $C^{\infty}$-schemes:
   $$
     \xymatrix{
	   X_{\varphi}\ar[rrr]^-{f_{\varphi}} \ar@{->>}[d]_-{\pi_{\varphi}}
	                               \ar@{_{(}->}[rrrd]^-{\tilde{f}\varphi}     &&&  Y \\
	   X	  &&& X\times Y \ar[lll]_{pr_X}   \ar[u]_{pr_Y} &\hspace{-10ex}.
	  }
   $$
 Cf.\ {\sc Figure}~1-3.
}\end{sdefinition}

%
\begin{figure}[htbp]
 \bigskip
  \centering
  \includegraphics[width=0.80\textwidth]{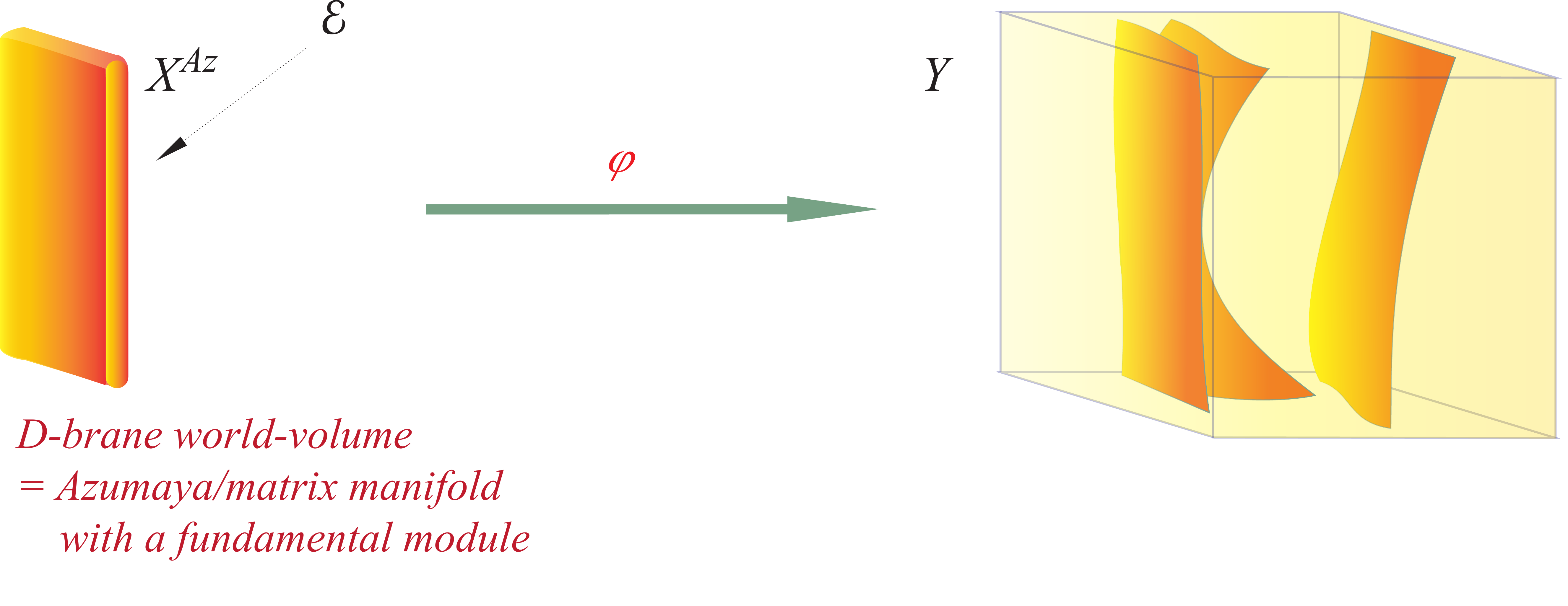}
  
  \bigskip
  \bigskip
 \centerline{\parbox{13cm}{\small\baselineskip 12pt
  {\sc Figure}~1-3.
 From the aspect of $C^{\infty}$-algebraic geometry in line with Grothendieck,
 D-branes as a fundamental/dynamical object in string theory
  are given by
	 `differentiable maps $\varphi:(X^{\!A\!z},{\cal E})\rightarrow Y$
	   from a matrix manifold (i.e.\ the D-brane world-volume) with a fundamental module
	    (and other open-string-induced structures)
      to the space-time'.
 In contrast to fundamental strings,
  $X^{\!A\!z}$ carries a matrix-type ``noncommutative cloud"
       over its underlying topology.
 Under a differentiable map $\varphi$
	     as defined in [L-Y3: Definition~5.3.1.5] (D(11.1)),
               		     cf.\ Definition~1-6 of the current note,
  the image $\varphi(X^{\!A\!z})$ can behavior in a more complicated way.	
 In particular, it could be disconnected or carry some nilpotent fuzzy structure.	
 See also {\sc Figure}~3-1 in Sec.~3.
  }}
\end{figure}

\smallskip

\begin{sremark} $[\,$in terms of function rings$\,]$. {\rm
 Equivalently,
  a $C^{\infty}$-map
    $$
	  \varphi\; :\; (X^{\!A\!z},{\cal E})\; \longrightarrow\;  Y
	$$
	is defined contravariantly by a ring-homomorphism 	
	$$
     \xymatrix{	
	   C^{\infty}(X^{\!A\!z})\,
	     :=\,  C^{\infty}({\cal O}_X^{A\!z})\,
         :=\, {\cal O}_X^{A\!z}(X) 		 &&& C^{\infty}(Y)\ar[lll]_-{\varphi^{\sharp}}
	  }
	$$
	 over ${\Bbb R}\subset {\Bbb C}$
	such that
	 \begin{itemize}	
	   \item[($1^{\prime}$)]
	     It extends to a commutative diagram
		  $$
		    \xymatrix{
			  C^{\infty}(X^{\!A\!z})
			      &&& C^{\infty}(Y) \ar[lll]_-{\varphi^{\sharp}}
				                                      \ar@{_{(}->}^-{pr_Y^{\sharp}}[d]   \\			    
			    \rule{0ex}{1em}\rule{1ex}{0em}C^{\infty}(X)\rule{1ex}{0em}
				      \ar@{^{(}->}[u]
				      \ar@{^{(}->}[rrr]_-{pr_X^{\sharp}}
				   &&& C^{\infty}(X\times Y) \ar[lllu]_-{\tilde{\varphi}^{\sharp}}		
			}
		  $$
		  of ring-homomorphisms
		  (over ${\Bbb R}$, or ${\Bbb R}\subset {\Bbb C}$ whenever applicable).
		
 	   \item[($2^{\prime}$)]
	    In terms of the diagram in Item ($1^{\prime}$),
		 let the $C^{\infty}(X)$-algebra
		 $$
		  C^{\infty}(X)\langle\Image(\varphi^{\sharp})\rangle\;
		 	 :=\;  \Image(\tilde{\varphi}^{\sharp}) \; \subset\; C^{\infty}(X^{\!A\!z})
         $$		
	 	  be equipped with the $C^{\infty}$-rings structure from that of $C^{\infty}(X\times Y)$ 
		  via the quotient map $\tilde{\varphi}^{\sharp}$.
        Then, it is required that,
		  as maps to $C^{\infty}(X)\langle \Image(\varphi^{\sharp})\rangle$,
		  both arrows of the following subdiagram
		 $$
   	      \xymatrix{
			C^{\infty}(X)\langle\Image(\varphi^{\sharp})\rangle
			      &&& C^{\infty}(Y) \ar[lll]_-{\varphi^{\sharp}} \\			
			    \rule{0ex}{1em}C^{\infty}(X) \ar@{^{(}->}[u]  				
		   }
		 $$
		 of the previous $4$-cornored diagram are now $C^{\infty}$-ring-homomorphisms.
    \end{itemize}		 	
}\end{sremark}

\smallskip

\begin{sdefinition} {\bf [push-forward $\varphi_{\ast}({\cal E})$].} {\rm
 Continuing Definition~1-6.
  Through $\varphi^{\sharp}:{\cal O}_Y\rightarrow {\cal O}_X^{A\!z}$,
    the fundamental (left) ${\cal O}_X^{A\!z}$-module ${\cal E}$
	becomes an ${\cal O}_Y$-module.
  This defines the {\it push-forward} $\varphi_{\ast}({\cal E})$ of ${\cal E}$ under $\varphi$.
}\end{sdefinition}

\smallskip

\begin{sdefinition} {\bf [surrogate of $X^{\!A\!z}$ specified by $\varphi$].} {\rm
 Continuing Definition~1-6.
 The $C^{\infty}$-scheme $X_{\varphi}$ is called
   the {\it surrogate of $X^{\!A\!z}$ specified by $\varphi$}.
}\end{sdefinition}

\smallskip

\begin{sdefinition} {\bf [graph of $\varphi$].} {\rm
 Continuing Definition~1-6.
 Through $\tilde{\varphi}^{\sharp}:{\cal O}_{X\times Y}\rightarrow {\cal O}_X^{A\!z}$,
    the fundamental (left) ${\cal O}_X^{A\!z}$-module ${\cal E}$
  	becomes an ${\cal O}_{X\times Y}$-module $\tilde{\cal E}_{\varphi}$.
  This defines the {\it graph} of $\varphi$.		
}\end{sdefinition}

\smallskip

\begin{sexample}   {\bf [smooth map from Azumaya/matrix point].} {\rm
 ([L-Y3: Sec.\ 3.2] (D(11.1)).)
 The most elementary example of a smooth map
   $\varphi:
      (X^{\!A\!z},{\cal E})
	    :=(X,{\cal O}_X^{A\!z}:=\Endsheaf_{{\cal O}_X^{\,\Bbb C}}({\cal E}),{\cal E})
	   \rightarrow Y$
   from an Azumaya/matrix manifold with a fundamental module to a smooth $n$-manifold $Y$
   is the case when $X$ is a point.
 In this case
   any smooth map $(p^{A\!z},{\Bbb C}^{\oplus r})\rightarrow Y$
      from an Azumaya/matrix point with a fundamental module $(p^{A\!z},{\Bbb C}^{\oplus r})$
	   to the real manifold ${\Bbb R}^n$ is of algebraic type,
	in the sense that that the corresponding $C^{\infty}$-rings -homomorphism
	 $$
	   \varphi^{\sharp}\;:\;  C^{\infty}(Y)\; \longrightarrow\; M_{r\times r}({\Bbb C})
	 $$
    factors through the following commutative diagram:
     $$
	  \xymatrix{
	   C^{\infty}(Y) \ar[rr]^-{\varphi^{\sharp}}
	         \ar[d]_-{\oplus_{j=1}^sT_{q_j}^{(r-1)} }
	     && M_{r\times r}({\Bbb C})    \\
       \oplus_{j=1}^s
	        \frac{{\Bbb R}[y_j^1,\,\cdots\,, y_j^n] }
	                 {(y_j^1,\,\cdots\,,\, y_j^n)^r}
	 		\ar[rru]_-{\underline{\varphi}^{\sharp}}
	  }
     $$	
	 for some $q_1,\,\cdots\,,\, q_s\in Y$,
	where
	   \begin{itemize}
	    \item[$\cdot$]
		 $(y_j^1,\,\cdots\,,\, y_j^n)$ is a local coordinate system in a neighborhood of $q_i\in Y$
		   with coordinates of $q_j$ all $0$,
		
		\item[$\cdot$]
		 $T_{q_j}^{(r-1)}$ is the map
		    `taking Taylor polynomial (of elements in $C^{\infty}(Y)$) at $q_j$
		      with respect to $(y_j^1,\,\cdots\,,\, y_j^n)$ up to and including degree $r-1$', and
			
	    \item[$\cdot$]		
		 $\underline{\varphi}^{\sharp}$ is an (algebraic) ring-homomorphism
		 over ${\Bbb R}\subset {\Bbb C}$.		
	   \end{itemize}
   Thus, similar to the algebraic case in [L-Y1: Sec.\ 4] (D(1)),
	 despite that {\it Space}$\,M_{r\times r}({\Bbb C})$ may look only one-point-like,
     under a smooth map $\varphi$
	 the Azumaya/matrix  ``noncommutative cloud" $M_{r\times r}({\Bbb C})$
     over {\it Space}$\,M_{r\times r}({\Bbb C})$ can ``split and condense"
     to various image $0$-dimensional $C^{\infty}$-schemes with a rich geometry.
   The latter image $C^{\infty}$-schemes in $Y$ can even have more than one component.
   These features generalize to smooth maps $\varphi$
     from general Azumaya manifolds with a fundamental module $(X^{\!A\!z},{\cal E})$ to $Y$.
   In the general case, one can ``see" conceptually how such maps look like
    by ``smearing"/``rolling" the above situation over $X$;
    cf.\ [L-Y3: Sec.\ 5.3.3] (D(11.1)).
 {\sc Figure}~1-4.
 %

 \begin{figure} [htbp]
  \bigskip
  \centering
  \includegraphics[width=0.80\textwidth]{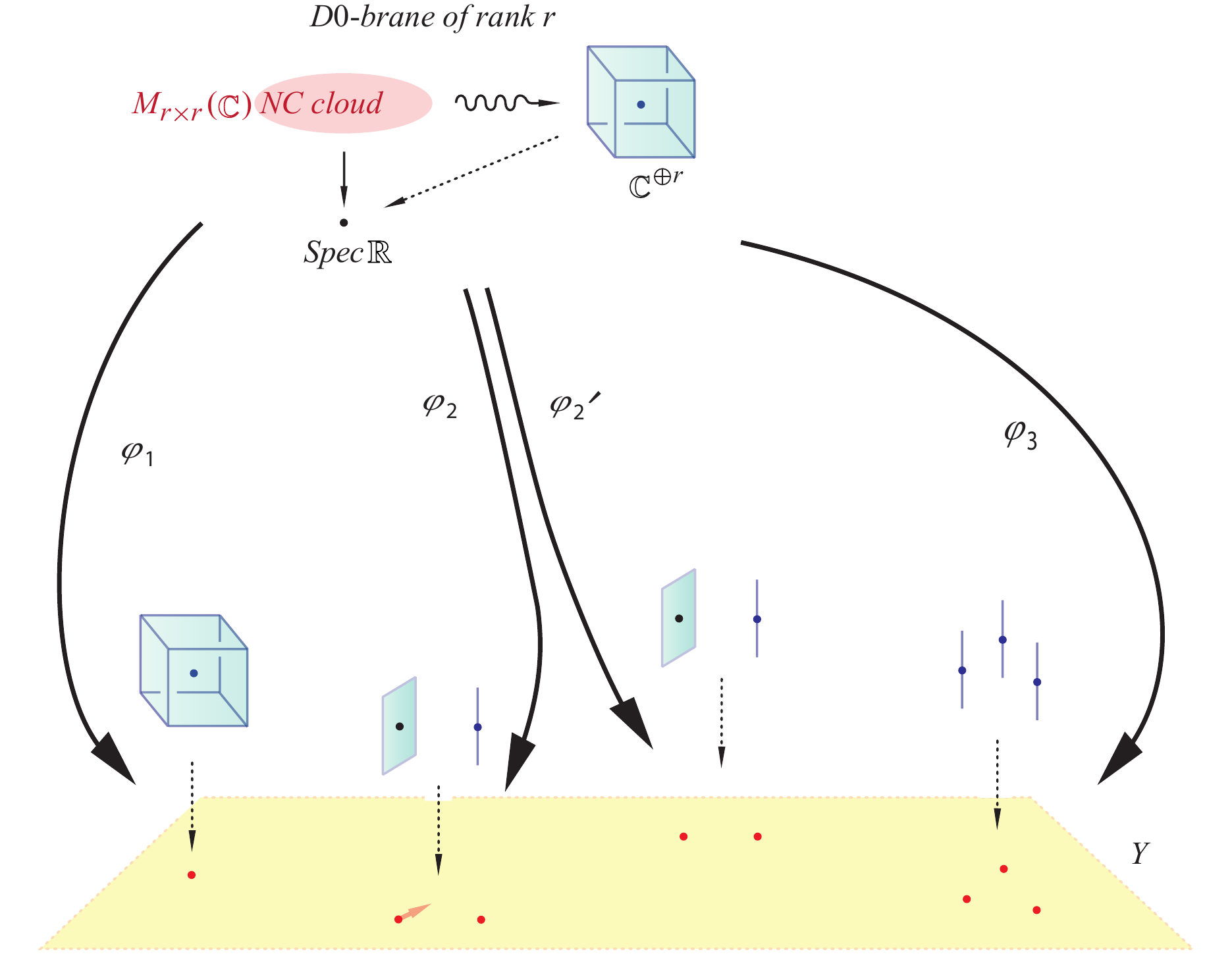}
 
  \bigskip
  \bigskip
  \centerline{\parbox{13cm}{\small\baselineskip 12pt
   {\sc Figure}~1-4. (Cf.\ [L-Y2: {\sc Figure}~2-1-1] (D(6)).)
   Various smooth maps $\varphi_1$, $\varphi_2$, $\varphi_2^{\prime}$, $\varphi_3:
    (p^{A\!z},{\Bbb C}^{\oplus r})\rightarrow Y$
      from an Azumaya/matrix point with a fundamental module to a smooth manifold $Y$
	are illustrated.
   Observe that when the noncommutative cloud $M_{r\times r}({\Bbb C})$	
     is ``squeezed" into the commutative $Y$ via a smooth map $\varphi$,
	 what's left
	  (i.e.\ $\varphi(p^{A\!z})$,
   	              which is identical to $\Supp(\varphi_{\ast}({\Bbb C}^{\oplus r}))$)
	 is a commutative nilpotent structure in the structure sheaf
	 of the image $C^{\infty}$-scheme $\varphi(p^{A\!z})$ in $Y$
	 if $\varphi(p^{A\!z})$ is not reduced; cf.\ $\varphi_2$.
   Observe also that,
    though $p^{A\!z}$ is connected in any sensible aspect, for $r\ge 2$,
    $\varphi(p^{A\!z})$ in general can be disconnected;
	 cf.\ $\varphi_2$, $\varphi_2^{\prime}$, and $\varphi_3$.
   In the figure, a module over a $C^{\infty}$-scheme is indicated by
    a dotted arrow $\xymatrix{ \ar @{.>}[r] &}$.	
       }}
  \bigskip
 \end{figure}

}\end{sexample}

\bigskip

\section{Weil algebras and determinacy of $C^{\infty}$-rings}

Basic definitions and facts on Weil algebras and determinacy of ideals of smooth functions
  that are relevant to the current note and its follow-ups are collected in this section.
They follow mainly from [M-R: Sec.\ I.3, Sec.\ I.4] of Ieke Moerdijk and Gonzalo E.\ Reyes.
See also
  [B-D: Sec.\ 2];  [Du1],   [Joy1: Example 2.9], and   [Ko: Sec.\ I.16, Sec.\ III.5].

\bigskip

\begin{flushleft}
{\bf Weil algebras}
\end{flushleft}
\begin{sdefinition} {\bf [Weil algebra].} {\rm
 A {\it Weil algebra} $R$ is a finite-dimensional commutative ${\Bbb R}$-algebra
  with a unique maximal ideal $m$ such that the residue field $R/m\simeq {\Bbb R}$.
}\end{sdefinition}

\bigskip

\noindent
The composition of the built-in ${\Bbb R}$-algebra-homomorphisms
 ${\Bbb R}\hookrightarrow R \rightarrow {\Bbb R}$ is the identity map.
 It follows that $R\simeq {\Bbb R}\oplus m$ canonically as ${\Bbb R}$-vector spaces.

\bigskip

\begin{stheorem}  {\bf [basic properties of Weil algebras].}
 Some basic properties of Weil algebras are listed below:
 \begin{itemize}
  \item[$(1)$] {\rm ([M-R: I.3.16 Corollary].)}
   Let $R$ be a Weil algebra with the maximal ideal $m$.
   Then $m^k=0$ for some $k$.
   
  \item[$(2)$] {\rm ([M-R: I.3.18 Corollary]; [Ko: III Theorem 5.3].)}
   There is a unique/canonical  $C^{\infty}$-ring structure on $R$
   that is compatible with the ring structure of $R$.		
  
  \item[$(3)$] {\rm (Cf.\ [M-R: I.3.19 Corollary]; [Du1].)}
   Let $R$ be a Weil algebra and $S$ be an arbitrary $C^{\infty}$-ring.
  Then there are canonical isomorphisms (of sets)
   $$
    \begin{array}{clc}
      & \Hom_{{\Bbb R}\mbox{\scriptsize -algebra}}(R,S)\;
	                  \simeq\; \Hom_{C^{\infty}\mbox{\scriptsize -ring}}(R,S)    	\\[1.2ex]
      \mbox{and}\hspace{1em}
	  & \Hom_{{\Bbb R}\mbox{\scriptsize -algebra}}(S,R)\;
	       \simeq\; \Hom_{C^{\infty}\mbox{\scriptsize -ring}}(S,R)\,.
		& \mbox{$\hspace{5ex}$}
	 \end{array}
   $$
   (Cf.\ Item {\rm (2)}.)
	
  \item[$(4)$]	{\rm ([M-R: I.3.21 Corollary (a)].)}
   Let $R$ and $S$ be Weil algebras.
   Then so is their push-out $R\otimes_{\infty}S$ as $C^{\infty}$-rings.
   The latter is identical to the tensor product $R\otimes_{\Bbb R} S$ of $R$ and $S$
     as ${\Bbb R}$-algebras.
	
  \item[$(5)$]	 {\rm ([Ko: III Theorem 5.3].)}
   More generally,
    let $R$ be a Weil algebra and $S$ be a $C^{\infty}$-ring.
  Then, their push-out $R\otimes_{\infty}S$ as $C^{\infty}$-rings exists and
     is identical to the tensor product $R\otimes_{\Bbb R} S$ of $R$ and $S$
     as ${\Bbb R}$-algebras.
        
  \item[$(6)$]  {\rm ([M-R: I.3.21 Corollary (b)].)}
   Let $R\subset S$ be a sub-${\Bbb R}$-algebra.
   If $S$ is a Weil algebra, then so is $R$.
 \end{itemize}		
\end{stheorem}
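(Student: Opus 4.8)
The engine common to all six items is the nilpotency of the maximal ideal $m$ together with the collapse of any smooth operation, applied to arguments that differ from real numbers by nilpotents, into a \emph{finite} Taylor polynomial (the iterated Hadamard Lemma). I would dispose of $(1)$ first, as it is the purely algebraic seed: since $R$ is a finite-dimensional ${\Bbb R}$-vector space the descending chain $m\supseteq m^2\supseteq\cdots$ stabilizes, say $m^k=m^{k+1}$, and Nakayama's Lemma applied to the finitely generated module $m^k$ over the Noetherian local ring $R$ forces $m^k=0$; equivalently, $m$ is the nilradical of the Artinian local ring $R$.

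For $(2)$ I would \emph{construct} the structure. Using $R={\Bbb R}\oplus m$ and writing $r_i=a_i+n_i$ with $a_i\in{\Bbb R}$ and $n_i\in m$, I set $f_R(r_1,\dots,r_n):=\sum_{|\alpha|<k}\frac1{\alpha!}\,\partial^{\alpha}f(a_1,\dots,a_n)\,n^{\alpha}$ for each smooth $f$, a finite sum by $(1)$; the $C^{\infty}$-ring axioms follow from Taylor's theorem and the chain rule, and uniqueness is forced because this same expansion must hold in any compatible $C^{\infty}$-structure. Both bijections in $(3)$ then reduce to this formula. The inclusions ``$C^{\infty}$-ring homomorphism $\Rightarrow$ ${\Bbb R}$-algebra homomorphism'' are clear, so the content is the reverse implication. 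Given an ${\Bbb R}$-algebra map $\phi\colon R\to S$, each $\phi(n_i)$ is nilpotent and the Taylor-modulo-nilpotents expansion valid in \emph{any} $C^{\infty}$-ring $S$ yields $f_S(\phi(r_1),\dots)=\phi\bigl(f_R(r_1,\dots)\bigr)$, so $\phi$ is $C^{\infty}$; for an ${\Bbb R}$-algebra map $\psi\colon S\to R$ one first checks that the character $S\to R\to R/m={\Bbb R}$ is $C^{\infty}$ (a one-line Hadamard computation), extracts the real parts $a_i$ of $\psi(s_i)$, and then uses Taylor with smooth remainder together with $m^k=0$ to kill the remainder, giving $\psi(f_S(s))=f_R(\psi(s))$.

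Items $(4)$ and $(5)$ I would handle by a universal-property computation, treating $(4)$ as a special case of $(5)$. For $(5)$ I would equip $R\otimes_{\Bbb R}S$, which is finite and free of rank $\dim_{\Bbb R}R$ over $S$, with a $C^{\infty}$-structure via the finite Taylor expansion along the nilpotent summand $m\otimes_{\Bbb R}S$, and then verify the coproduct property $\Hom_{C^{\infty}}(R\otimes_{\Bbb R}S,T)\simeq\Hom_{C^{\infty}}(R,T)\times\Hom_{C^{\infty}}(S,T)$ by pairing $(3)$ with the corresponding identity for ${\Bbb R}$-algebras. Part $(4)$ is the case in which $S$ is itself Weil, where one also records that $R\otimes_{\Bbb R}S$ is finite-dimensional, local with maximal ideal $m_R\otimes S+R\otimes m_S$, and has residue field ${\Bbb R}$. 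For $(6)$ I would argue directly: $R$ is finite-dimensional as a subspace of $S$; setting $m_R:=R\cap m_S$ gives $R/m_R\hookrightarrow S/m_S={\Bbb R}$ with $1\mapsto 1$, hence $R/m_R={\Bbb R}$; and any $x=a+n\in R$ with $0\neq a\in{\Bbb R}$ and $n\in R$ nilpotent (by $(1)$ for $S$) is a unit in $R$ via the terminating series $x^{-1}=a^{-1}\sum_{j\ge 0}(-n/a)^{j}$, so $R$ is local with maximal ideal $m_R$ and therefore Weil.

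I expect the only genuine difficulty to lie in $(5)$: unlike $(4)$, here $S$ is an arbitrary, possibly infinite-dimensional, $C^{\infty}$-ring, so the identification of the $C^{\infty}$-pushout with the ordinary ${\Bbb R}$-tensor product cannot be reduced to finite-dimensional linear algebra and must rest squarely on the Hadamard-Lemma expansion holding uniformly in $S$; the remaining items are essentially bookkeeping around the nilpotency established in $(1)$.
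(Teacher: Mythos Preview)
The paper does not supply its own proof of this theorem: it is stated purely as a collection of background facts, each item carrying a citation to Moerdijk--Reyes, Kock, or Dubuc, and Section~2 opens by announcing that ``basic definitions and facts \ldots\ are collected in this section'' from those sources. There is therefore nothing in the paper to compare your argument against.

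That said, your sketch is correct and is essentially the standard proof one finds in the cited references. The organizing principle you identify --- nilpotency of $m$ from $(1)$, then the finite Taylor/Hadamard expansion doing all the work in $(2)$--$(5)$ --- is exactly the mechanism Moerdijk--Reyes and Kock use. Two small remarks: in your treatment of the direction $\psi\colon S\to R$ in $(3)$, the preliminary step of showing the character $S\to R/m={\Bbb R}$ is $C^{\infty}$ is not strictly necessary, since the same Hadamard-with-remainder expansion (taken to order $k$ with $m^k=0$) already yields $\psi(f_S(s))=f_R(\psi(s))$ directly; your route is not wrong, just slightly longer. And your assessment of where the genuine content lies is accurate: $(5)$ is the one place where finite-dimensionality of both factors is unavailable, so the argument must lean entirely on the Hadamard expansion holding in the arbitrary $C^{\infty}$-ring $S$, which is precisely how Kock handles it.
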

 
\bigskip

\begin{sremark} {$[\,$finite-dimensional commutative ${\Bbb R}$-algebra$\,]$.} {\rm
 Note that
  any finite-dimensional commutative ${\Bbb R}$-algebra $R$
    that has finitely many maximal ideals with the associated residue field all isomorphic to ${\Bbb R}$
  can be written uniquely, up to an isomorphism, as a (finite) direct product of Weil algebras.
 $R$ is thus endowed also with a unique/canonical $C^{\infty}$-ring structure
   that is compatible with the ring structure of $R$.
 Theorem~2.2 remains to hold
      with `Weil algebra' replaced by
	  `finite-dimensional commutative ${\Bbb R}$-algebra
	   that has finitely many maximal ideals with the associated residue field all isomorphic to ${\Bbb R}$'.
	
 Caution that in general a finite-dimensional ${\Bbb R}$-algebra can have maximal ideals
   with residue field ${\Bbb C}$.
 They may behave pathological from the aspect of $C^{\infty}$-algebraic geometry.
}\end{sremark}

\smallskip

\begin{sremark}{$[\,$$C^{\infty}$-ring-homomorphism from/to Weil algebra$\,]$.} {\rm
 (Cf.\ [M-R: I.3.19 Corollary] and [Ko: Sec.\ III, Theorem 5.3 and Proposition 5.12].)
 Let
   $R$ be a Weil algebra,
   $S$ be an arbitrary $C^{\infty}$-ring,  and
   $M$ be a smooth manifold of dimension $n$.
 Then as consequences of Hadamard's Lemma,
  \begin{itemize}
   \item[(1)]
   {\it any $C^{\infty}$-ring-homomorphism
     $\alpha:  R\rightarrow S$ lifts to $C^{\infty}$-ring-homomorphisms
     $$
	  \xymatrix{
	    C^{\infty}({\Bbb R}^l)  \ar@{->>}[d]   \ar[rrd]^-{\hat{\alpha}} \\
		R \ar[rr]^-{\alpha}   && S
	  }
	 $$
	 for some $l\in {\Bbb Z}_{\ge 0}$}   \;\;\; and

   \item[(2)]
	 {\it any $C^{\infty}$-ring-homomorphism
     $\beta:  C^{\infty}(M)\rightarrow R$ factors through $C^{\infty}$-ring-homomorphisms
	 $$
	   \xymatrix{
	     C^{\infty}(M) \ar[rr]^-{\beta} \ar@{->>}[d]_-{T_p^{(k)}}   && R \\
		  \frac{{\Bbb R}[y^1,\,\cdots,\,,\, y^n]}
                    {(y^1,\,\cdots\,,\,y^n)^{k+1}}		  \ar[rru]_-{\underline\beta}	
	   }	
	 $$
	 for some $p\in M$ and $k\in {\Bbb Z}_{\ge 1}$,
	 where $(y^1,\,\,\cdots\,,\,y^n )$ is a local coordinate system centered at $p$
	   and the map $T_p^{(k)}$ is `taking Taylor polynomial with respect to the local coordinate system
	   $(y^1,\,\cdots\,,\, y^n)$ up to and including degree $k$'.}
  \end{itemize}
 Statement (2)  was generalized to similar statements concerning
  $C^k$-admissible ring-homomorphisms  $C^k(Y)\rightarrow M_{r\times r}({\Bbb C})$
  when we developed the notion of
   `$k$-times differentiable map'  (i.e.\ `$C^k$-map')
     from an Azumaya/matrix point $(p,M_{r\times r}({\Bbb C}))$
     to a real manifold $Y$, $k\in {\Bbb Z}_{\ge 0}\cup\{\infty\}$,
   in [L-Y3: Sec.\ 3] (D(11.1)).
}\end{sremark}

\bigskip

\begin{flushleft}
{\bf  Determinacy of $C^{\infty}$-rings}
\end{flushleft}
\begin{sdefinition}
 {\bf [pointwise/algebraic/formal/local determinacy of $C^{\infty}$-ring].}
 {\rm  ([M-R: I.4.1 Definition].)
 Let $R$ be a $C^{\infty}$-ring.
  \begin{itemize}
   \item[$(a)$]
    $R$ is {\it point determined} if it can be embedded into a direct product
	  $\prod_{i\in I}{\Bbb R}$ of the base $C^{\infty}$-ring ${\Bbb R}$.
	
   \item[$(b)$]
    $R$ is {\it near-point determined} if it can be embedded into a direct product of Weil algebras
	 If, in addition, all these Weil algebras $R$ can be chosen such that their respective maximal ideal $m$
	   satisfies the condition that $m^{k+1}=0$ for a fixed $k\in {\Bbb Z}_{\ge 0}$,
	 then $R$ is called {\it order-$k$-near-point determined} or simply
	  {\it $k$-near-point determined} .
  	  	  	
   \item[$(c)$]
    $R$ is {\it closed} if it can be embedded into a direct product of formal $C^{\infty}$-algebras
	(i.e.\ $C^{\infty}$-rings of the form ${\Bbb R}[[x_1,\,\cdots\,,x_n]]/I$ for some $n$ and ideal $I$).
	
   \item[$(d)$]
    $R$ is {\it germ determined}  if it can be embedded into a direct product of
	 pointed local $C^{\infty}$-rings
	 (i.e.\ $C^{\infty}$-rings $A$ that have a unique maximal ideal $m_A$
	            with residue field $A/{m_A}\simeq {\Bbb R}$).    	
  \end{itemize}
}\end{sdefinition}

\bigskip

The above intrinsic determinacy property of $C^{\infty}$-rings
  has the following extrinsic characterization through ideals
  when the $C^{\infty}$-ring $R$ is finitely generated:

\bigskip
 
\begin{stheorem} {\bf [in terms of ideals of finitely generated $C^{\infty}$-ring].}
{\rm ([M-R: I.4.2 Theorem].)}
 Let
   $R$ be a $C^{\infty}$-ring of the form $C^{\infty}(M)/I$,
      where $M$ is a smooth manifold, and
   $$
       Z(I)\; :=\;  \bigcap_{f\in I}\{f^{-1}(0)\}\; \subset\;  M
   $$
   be the zero-set of $I$.
 For an $x\in M$ and $k\in {\Bbb Z}$, let
   $$
     T_x^{(k)}(I)\; :=\;
          \{\,\mbox{Taylor expansion $T_x^{(k)}(f)$ of $f$ at $x$
		                                                                            up to and including degree $k$\,}\,|\, f\in I  \,\}
   $$
    and
   $$
     T_x^{(\infty)}(I)\; :=\;
          \{\,\mbox{full Taylor expansion $T_x^{\infty}(f)$ of $f$ at $x$\,}\,|\, f\in I  \,\}
   $$    	
  with respect to some fixed coordinate system around $x$.
 For $f\in C^{\infty}(M)$, denote by $f_{(x)}$ the germ of $f$ at $x\in M$ and
   $I_{(x)}:=\{f_{(x)}\,|\, f\in I\}$.
 Then
 \begin{itemize}
  \item[$(a)$]
   $R$ is point determined if and only if
    for all $f\in C^{\infty}(M)$
    $$
      \mbox{if $\; f(x)=0\; $ for all $x\in Z(I)$, then $\; f\in I$.}
    $$	

  \item[$(b)$]
   $R$ is $k$-near-point determined,, $k\in{\Bbb Z}_{\ge 0}$ if and only if
    for all $f\in C^{\infty}(M)$
    $$
      \mbox{if $\; T_x^{(k)}(f)\in  T_x^{(k)}(I)\; $ for all $x\in Z(I)$,
	   then $\; f\in I$.}
    $$		

  \item[$(c)$]
   $R$ is closed if and only if
    for all $f\in C^{\infty}(M)$
    $$
      \mbox{if $\; T_x^{(\infty)}(f)\in  T_x^{(\infty)}(I)\; $ for all $x\in Z(I)$,
	   then $\; f\in I$.}
    $$		
	
  \item[$(d)$]
    $R$ is germ determined if and only if
    for all $f\in C^{\infty}(M)$
    $$
      \mbox{if $\; f_{(x)}\in  I_{(x)}\; $ for all $x\in Z(I)$,
	   then $\; f\in I$.}
    $$		
 \end{itemize}
\end{stheorem}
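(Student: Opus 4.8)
The plan is to treat all four cases uniformly. For each point $x\in Z(I)$ there is a natural \emph{localization} of $R=C^\infty(M)/I$ at $x$, obtained by composing the quotient $C^\infty(M)\twoheadrightarrow R$ with a point-centered operation: evaluation $\ev x$ for case $(a)$, the truncated Taylor map $T_x^{(k)}$ for case $(b)$, the full Taylor map $T_x^{(\infty)}$ for case $(c)$, and passage to germs $f\mapsto f_{(x)}$ for case $(d)$. Applied to $R$ these produce, respectively, the quotient test-objects
$$ L_x^{(a)}(R)={\Bbb R},\quad L_x^{(b)}(R)=\frac{{\Bbb R}[y]/(y)^{k+1}}{T_x^{(k)}(I)},\quad L_x^{(c)}(R)=\frac{{\Bbb R}[[y]]}{T_x^{(\infty)}(I)},\quad L_x^{(d)}(R)=C_x^\infty(M)/I_{(x)}, $$
where $(y)=(y^1,\dots,y^n)$ for a coordinate system centered at $x$. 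Since $x\in Z(I)$ forces $T_x^{(k)}(I)$ and $T_x^{(\infty)}(I)$ to lie in the maximal ideal of their ambient local rings, each $L_x^{(b)}(R)$ is finite-dimensional and local (a quotient of the local jet algebra ${\Bbb R}[y]/(y)^{k+1}$ by an ideal inside its maximal ideal), hence a Weil algebra with maximal ideal killed in degree $k+1$; each $L_x^{(c)}(R)$ is a formal $C^\infty$-algebra (Borel's lemma makes $T_x^{(\infty)}$ surject onto ${\Bbb R}[[y]]$, so the image of $I$ is a genuine ideal); and each $L_x^{(d)}(R)$ is a pointed local $C^\infty$-ring. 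Thus, writing $L_x$ for the flavor-appropriate localization, the \emph{canonical map}
$$ \rho\;:\; R\;\longrightarrow\; \prod_{x\in Z(I)} L_x(R) $$
always lands in a product of test-objects of the relevant type, and I want to show that in each case $R$ is determined in the sense of Definition~2.5 if and only if $\rho$ is injective.

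The substantive content is the factorization step, which powers the ``only if'' direction: \emph{every} $C^\infty$-ring-homomorphism from $R$ into a test-object of a given flavor factors through $L_p(R)$ for some $p\in Z(I)$. Given $\phi:R\to T$ into such an object, lift it along $C^\infty(M)\twoheadrightarrow R$ to $\psi:C^\infty(M)\to T$ (so $\psi$ vanishes on $I$) and compose with the residue map $T\to T/m_T\simeq{\Bbb R}$; the result is a $C^\infty$-ring-homomorphism $C^\infty(M)\to{\Bbb R}$, necessarily evaluation at some point $p\in M$. Since $\psi(f)=0$ for $f\in I$, taking residues gives $f(p)=0$, so $p\in Z(I)$. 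That $\psi$ then factors through the localization at $p$ — and hence, killing $I$, descends to $L_p(R)$ — is precisely Remark~2.4(2) in case $(b)$; the analogues in cases $(c)$ and $(d)$ come from Borel's lemma and from the observation that any $g$ with $g(p)\neq 0$ maps to a unit in a local target, forcing $\psi$ through the germ ring $C_p^\infty(M)$. Case $(a)$ is the degenerate instance $T={\Bbb R}$ itself.

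Granting the factorization, the equivalence follows formally. The direction ``$\rho$ injective $\Rightarrow$ determined'' is immediate, since $\prod_{x}L_x(R)$ is a product of test-objects of the right kind. Conversely, if $R$ embeds into some product $\prod_i T_i$, each coordinate $R\to T_i$ factors through $L_{x_i}(R)$ with $x_i\in Z(I)$, so the embedding factors through $\prod_i L_{x_i}(R)$, hence through the projection of $\rho$ onto those coordinates; injectivity of the embedding then forces $\rho$ injective. Finally I would read the stated conditions directly off $\Ker \rho$: an element $f+I$ lies there precisely when its image in every $L_x(R)$ vanishes, i.e.\ when $f(x)=0$ (case $a$), $T_x^{(k)}(f)\in T_x^{(k)}(I)$ (case $b$), $T_x^{(\infty)}(f)\in T_x^{(\infty)}(I)$ (case $c$), or $f_{(x)}\in I_{(x)}$ (case $d$) for all $x\in Z(I)$; so $\rho$ is injective exactly when each such condition implies $f\in I$.

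The main obstacle is the factorization step, and within it the two classical inputs that are not purely formal: that every $C^\infty$-ring-homomorphism $C^\infty(M)\to{\Bbb R}$ is evaluation at a point of the manifold (so the base point $p$ exists and lies in $Z(I)$), and Borel's lemma for case $(c)$ (guaranteeing $T_p^{(\infty)}$ surjects onto ${\Bbb R}[[y]]$, so that $T_p^{(\infty)}(I)$ is an ideal with formal quotient). Everything else — localness of the quotient objects, the reduction to injectivity of $\rho$, and the translation into the displayed ideal-membership conditions — is bookkeeping resting on Theorem~2.2 and Remark~2.4.
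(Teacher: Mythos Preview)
The paper does not supply its own proof of this theorem: it is quoted verbatim as background from Moerdijk--Reyes [M-R: I.4.2 Theorem], with no argument given. So there is no in-paper proof to compare against.

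Your proposal is correct and is essentially the standard argument one finds in [M-R]. The uniform scheme --- build the canonical map $\rho:R\to\prod_{x\in Z(I)}L_x(R)$ into the product of flavor-appropriate localizations, reduce determinacy to injectivity of $\rho$ via a factorization lemma, and then read off the ideal-membership condition from $\Ker\rho$ --- is exactly the structure of the Moerdijk--Reyes proof. You have also correctly isolated the two non-formal inputs: the ``Milnor exercise'' that every $C^\infty$-ring-homomorphism $C^\infty(M)\to{\Bbb R}$ is evaluation at a point of $M$ (valid under the paper's standing hypotheses that manifolds are paracompact Hausdorff with partitions of unity), and Borel's lemma for case~$(c)$. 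One small point worth making explicit in case~$(d)$: the passage from ``$\psi$ inverts every $g$ with $g(p)\neq 0$'' to ``$\psi$ factors through $C^\infty_p(M)$'' uses that the germ ring is the $C^\infty$-localization of $C^\infty(M)$ at that multiplicative set, which is a standard but nontrivial fact (it needs a bump-function argument, not just universal algebra). With that caveat, the argument is complete.
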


\bigskip

\noindent
Under the setting of the Theorem,
 if  $I$ satisfies the specified condition in Item $(a)$
    (resp.\  Item $(b)$, Item $(c)$, Item $(d)$),
  we also say that
   $I$ is {\it point determined}
    (resp.\ {\it $k$-near-point determined},  {\it closed}, {\it germ determined}).

\bigskip

\begin{sremark} {$[\,$hierarchy of determinacy$\,]$} {\rm  ([M-R: I.4.5 Proposition].)
 For an ideal $I\subset C^{\infty}(M)$, it follows by definition that
  $$
    \mbox{\it point determined}\;\;
	 \Rightarrow\;\; \mbox{\it near-point determined}\;\;
	 \Rightarrow\;\; \mbox{\it closed}\;\;
	 \Rightarrow\;\; \mbox{\it germ determined}\,.
  $$
}\end{sremark}

\smallskip
	
\begin{sremark} {$[$\,Fr\'{e}chet topology on $C^{\infty}(M)$\,$]$.} {\rm
 ([Ma], [M-R: I.4.4 Remark].)
 Let $M$ be a smooth manifold.
 Then one can define the {\it Fr\'{e}chet topology} on $C^{\infty}(M)$
  by taking the neighborhood system around an $f\in C^{\infty}(M)$  to be
  the subsets
  $$
    V_{(f; \varepsilon,n,K)}\; :=\;
	 \left\{ g\in C^{\infty}(M)\,\left|\,
	                       \supremum_{x\in K, |\alpha|\le n}| D^{\alpha}g-D^{\alpha}f|
						                                       \right.
	 \right\}
  $$
  of $C^{\infty}(M)$ for $\varepsilon>0$, $n\in {\Bbb Z}_{\ge 1}$, and $K\subset M$ compact.
 Here,
   we fix an atlas on $M$,
   $D^{\alpha}$ is the derivative with respect to local coordinates specified by $\alpha$,
   $|\alpha|$ is the total degree.
 The topology the system generates is independent of the choices of the atlas on $M$.
 In terms of this topology,  an ideal $I\subset C^{\infty}(M)$ is closed in the sense of
  Theorem~2.6 (c)
  if and only if it is closed with respect to  teh Fr\'{e}chet topology on $C^{\infty}(M)$.
}\end{sremark}

\bigskip

Readers are referred to [M-R: I.4] for more discussions.

\bigskip

\section{Lemma on a finite algebraicness property of smooth maps
                   $\varphi: (X^{\!A\!z},{\cal E})\rightarrow Y$}
				
With the necessary background reviewed in Sec.\ 1 and Sec.\ 2,
we now state and prove the main lemma of this note.

\bigskip

\begin{flushleft}
{\bf Lemma on a finite algebraicness property of smooth maps
                $\varphi: (X^{\!A\!z},{\cal E})\rightarrow Y$}
\end{flushleft}
\begin{slemma} {\bf [pointwise finite algebraicness over $X$].} {\it
 Let
   $(X,{\cal O}_X)$ be a smooth manifold,
   ${\cal E}$ be a locally free ${\cal O}_X^{\,\Bbb C}$-module of rank $r$,
   $$
     \varphi\; :\;
      (X^{\!A\!z},{\cal E})
	     :=(X,
	      {\cal O}_X^{A\!z}:=\Endsheaf_{{\cal O}_X^{\,\Bbb C}}({\cal E}),
		   {\cal E})\;
		 \longrightarrow\;  (Y,{\cal O}_Y)
   $$
	be a smooth map from the Azumaya/matrix manifold with a fundamental module
	  $(X^{\!A\!z},{\cal E})$ to a smooth manifold $Y$
	that is associated to a contravariant equivalence class
    $$
	  \varphi^{\sharp}\; :\; {\cal O}_Y\; \longrightarrow\;  {\cal O}_X^{A\!z}
	$$
    of gluing systems of
	$C^{\infty}$-admissible ring-homomorphisms over ${\Bbb R}\subset {\Bbb C}$.
 Let $\tilde{\cal E}_{\varphi}$ be the graph of  $\varphi$.
  Recall that $\tilde{\cal E}_{\varphi}$ is an ${\cal O}_{X\times Y}^{\,\Bbb C}$-module
    that is flat over $X$ of relative ${\Bbb C}$-length $r$.
 Then
  \begin{itemize}
   \item[$(1)$]
    The $C^{\infty}$-scheme-theoretical support $\Supp(\tilde{\cal E}_{\varphi})$
	 of $\tilde{\cal E}_{\varphi}$ is $(r-1)$-near-point determined.
  
   \item[$(2)$]
    The image $C^{\infty}$-scheme $\Image\varphi := \varphi(X^{\!A\!z})$ of $\varphi$
	 is $(r-1)$-near-point determined.
  \end{itemize}
  In particular, pointwise over $X$,
   both $\Supp(\tilde{\cal E}_{\varphi})\subset X\times Y$ and
            $\Image\varphi\subset Y$
   are finite algebraic with respect to their respective point-determined reduced subschemes
    $\Supp(\tilde{\cal E}_{\varphi})_{\redscriptsize}
	        \subset \Supp(\tilde{\cal E}_{\varphi})$ and
   $(\Image\varphi)_{\redscriptsize}\subset \Image\varphi$.
}\end{slemma}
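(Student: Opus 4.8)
The plan is to reduce everything to the pointwise-over-$X$ situation, where the fiber of $X^{\!A\!z}$ is an Azumaya/matrix point, and then to reassemble the fiberwise data by a separation-of-points embedding, feeding the result into the intrinsic characterization of near-point determinacy in Definition~2.5(b). Both (1) and (2), together with the concluding ``finite algebraicness'' assertion, will follow from this single mechanism; the only genuinely new geometric input is the already-established structure theorem for maps from an Azumaya point, Example~1.11.

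First I would fix $x\in X$ and restrict $\varphi$ to the Azumaya/matrix point over $x$. Because $\tilde{\cal E}_\varphi$ is flat over $X$ of relative ${\Bbb C}$-length $r$, the fiber ${\cal E}|_x\simeq {\Bbb C}^{\oplus r}$ carries the tautological $M_{r\times r}({\Bbb C})$-action, and composing $\tilde\varphi^\sharp$ with evaluation at $x$ yields a $C^\infty$-ring-homomorphism $\tilde\varphi_x^\sharp:C^\infty(X\times Y)\to M_{r\times r}({\Bbb C})$, i.e.\ a smooth map from an Azumaya point to the manifold $X\times Y$. Example~1.11 applies to this fiber: its image is a finite-dimensional commutative ${\Bbb R}$-algebra $A_x$ factoring through a finite product $\bigoplus_j {\Bbb R}[y_j^1,\ldots]/(y_j^1,\ldots)^r$ of real truncated-polynomial Weil algebras. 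Two points deserve care. The eigenvalues of $\tilde\varphi_x^\sharp(g)$ are real (the $C^\infty$-ring structure forces this, as recalled in the discussion preceding Definition~1.6), so the local factors are genuine real Weil algebras and the supporting points lie in the real manifold; and the truncation exponent is uniformly $\le r$, since the algebra acts faithfully on an $r$-dimensional module, whence each maximal ideal $m_j$ satisfies $m_j^{\,r}=0$. Thus each $A_x$ is a product of Weil algebras of order $\le r-1$, the bound being uniform in $x$.

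Next I would assemble the fibers. Since ${\cal E}$ is the faithful fundamental module of ${\cal O}_X^{A\!z}=\Endsheaf_{{\cal O}_X^{\,\Bbb C}}({\cal E})$, the annihilator of $\tilde{\cal E}_\varphi$ in ${\cal O}_{X\times Y}$ coincides with $\ker\tilde\varphi^\sharp$, so the $C^\infty$-scheme-theoretic support $\Supp(\tilde{\cal E}_\varphi)$ has structure ring $A:=\Image(\tilde\varphi^\sharp)={\cal O}_{X\times Y}/\ker\tilde\varphi^\sharp$. Evaluation at each $x$ gives $C^\infty$-ring-homomorphisms $A\to A_x$, and the resulting product map $A\to\prod_{x\in X}A_x$ is injective, because a smooth section of $\Endsheaf_{{\cal O}_X^{\,\Bbb C}}({\cal E})$ vanishing at every point of $X$ is the zero section. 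A product of products of Weil algebras of order $\le r-1$ is again such a product, so Definition~2.5(b) shows at once that $A$ is $(r-1)$-near-point determined, proving (1). For (2), the structure ring of $\Image\varphi\subset Y$ is $\Image(\varphi^\sharp)=C^\infty(Y)/\ker\varphi^\sharp$, which sits inside $A$ as a sub-$C^\infty$-ring via $\varphi^\sharp=\tilde\varphi^\sharp\circ \pr_Y^\sharp$; restricting the same embedding (equivalently, running the fiberwise argument with $\varphi^\sharp$ in place of $\tilde\varphi^\sharp$) shows it too is $(r-1)$-near-point determined.

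Finally, the concluding ``finite algebraicness'' assertion is a reformulation of what the embedding produces. Killing nilpotents embeds the reduced rings into $\prod_x\prod_j{\Bbb R}$, so $\Supp(\tilde{\cal E}_\varphi)_{\redscriptsize}$ and $(\Image\varphi)_{\redscriptsize}$ are point determined (Definition~2.5(a)); and the full schemes differ from them, pointwise over $X$, only by the finite-dimensional order-$\le(r-1)$ Weil-algebra factors $\bigoplus_j{\Bbb R}[y_j]/(y_j)^r$, which is precisely the sense in which they are \emph{finite algebraic} thickenings of their reductions. I expect the main obstacle to be not the gluing --- which is the routine separation-of-points step above --- but the fiberwise reduction: one must verify that restriction to the fiber over $x$ genuinely lands in the hypotheses of Example~1.11 (in particular that $\tilde\varphi_x^\sharp$ is $C^\infty$-admissible with real eigenvalues), and that the nilpotency order is controlled uniformly by the single integer $r-1$ rather than varying with $x$ or with the number of image points.
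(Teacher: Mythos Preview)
Your proposal is correct and follows essentially the same route as the paper: identify the structure ring of $\Supp(\tilde{\cal E}_\varphi)$ with $\Image(\tilde\varphi^\sharp)=C^\infty(X_\varphi)$, restrict pointwise over $X$ to obtain an injective $C^\infty$-ring-homomorphism into $\prod_{x\in X}C^\infty(\pi_\varphi^{-1}(x))$ (the injectivity being exactly your ``section vanishing at every point is zero'' observation, which the paper phrases as the monomorphism $\alpha\circ\beta$ into $\prod_x M_{r\times r}({\Bbb C})$), recognize each fiber ring as a finite product of Weil algebras via the Azumaya-point analysis (your Example~1.11, the paper's reference to [L-Y3: Sec.~3.2]), and bound the nilpotency order uniformly by $r-1$ using $a^r=0$ for nilpotent $r\times r$ matrices. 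Your derivation of (2) by exhibiting $\Image(\varphi^\sharp)$ as a sub-$C^\infty$-ring of $A$ is a slightly cleaner phrasing than the paper's one-line appeal to $\Image\varphi=\pr_Y(\Supp(\tilde{\cal E}_\varphi))$, but the content is the same.
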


\begin{proof}
 Recall
  the commutative diagram of morphisms of $C^{\infty}$-schemes
   $$
     \xymatrix{
	   X_{\varphi}\ar[rrr]^-{f_{\varphi}} \ar@{->>}[d]_-{\pi_{\varphi}}
	        \ar@{_{(}->}[rrrd]^-{\tilde{f}\varphi}     &&&  Y \\
	   X	  &&& X\times Y \ar[lll]_{pr_X}   \ar[u]_{pr_Y}
	 }
   $$
   underlying the smooth map $\varphi$  and
  the built-in isomorphism
   $$
      \Supp(\tilde{\cal E}_{\varphi})\;\simeq\; X_{\varphi}
   $$
  of $C^{\infty}$-schemes under $\tilde{\varphi}$.
 We'll show that
   \begin{itemize}
    \item[$\cdot$]
     {\it $C^{\infty}(X_{\varphi})$ is $(r-1)$-near-point determined}
   \end{itemize}
   and hence prove Statement (1).
 Since
   $\Image \varphi= \Image (f_{\varphi})= \pr_Y(\Supp(\tilde{\cal E}_{\varphi}))$,
  Statement (2) follows from Statement (1).

 Recall that,  by construction,
  $C^{\infty}(X_{\varphi})
   = C^{\infty}(X)\langle\varphi^{\sharp}({\cal O}_Y)\rangle$
 and that there is a built-in sequence of ${\Bbb R}$-algebra-homomorphisms
   $$
     C^{\infty}(X)\; \subset\; C^{\infty}(X_{\varphi})\;
	  \subset\;  C^{\infty}({\cal O}_X^{A\!z})\,
	                       :=\, C^{\infty}(\Endsheaf_{{\cal O}_X^{\,\Bbb C}}({\cal E}))
   $$
   with the first inclusion a $C^{\infty}$-ring-monomorphism.
  Restrictions to all $x\in X$ give then a sequence of ${\Bbb R}$-algebra-homomorphisms
   $$
     C^{\infty}(X_{\varphi})\;
	  \stackrel{\alpha}{\longrightarrow}\;
	 \prod_{x\in X}C^{\infty}(\pi_{\varphi}^{-1}(x))\;
	  \stackrel{\beta}{\longrightarrow}\;
	 \prod_{x\in X}\End_{\Bbb C}({\cal E}_x)\;
	   \simeq\; \prod_{x\in X}M_{r\times r}({\Bbb C})\,,
   $$
   where
    ${\cal E}_x\simeq {\Bbb C}^r$ is the fiber of ${\cal E}$ at $x\in X$,
    $M_{r\times r}({\Bbb C})$ is a ${\Bbb C}$-algebra of $r\times r$-matrices,	and
	$\alpha$ is a $C^{\infty}$-ring-homomorphism.
 Again by construction, both $\beta$ and $\alpha\circ\beta$ are monomorphisms.
 This implies that $\alpha$ must also be a monomorphism.
 Since
   each $C^{\infty}(\pi_{\varphi}^{-1}(x))$, $x\in X$,
     is a finite-dimensional ${\Bbb R}$-algebra with the number of maximal ideals bounded uniformly by $r$
	and all the residue field of these maximal ideals isomorphic to ${\Bbb R}$
	(cf.\ [L-Y3: Sec.\ 3.2] (D(11.1))),
  each $C^{\infty}(\pi_{\varphi}^{-1}(x))$ is a direct product of Weil algebras.		
 This proves that $C^{\infty}(X_{\varphi})$ embeds into a direct product of Weil algebras
  and hence is near-point determined by definition.
 Since any nilpotent element $a\in M_{r\times r}({\Bbb C})$ satisfies $a^r=0$,
  $C^{\infty}(X_{\varphi})$ must be then $(r-1)$-near-point determined.
 This proves the lemma.

\end{proof}

\bigskip

\begin{sremark}{$[\,$from germs of local finite algebraic extension of $C^{\infty}(U)$$\,]$.}
{\rm
 Recall the local study in [L-Y3: Sec.\ 3, Sec.\ 5.1] (D(11.1)).
 Given a $C^{\infty}$-map
   $\varphi :
      (X^{\!A\!z},{\cal E})
	     :=(X,
	      {\cal O}_X^{A\!z}:=\Endsheaf_{{\cal O}_X^{\,\Bbb C}}({\cal E}),
		   {\cal E})
		 \rightarrow  (Y,{\cal O}_Y)$
   associated to a contravariant equivalence class
   $\varphi^{\sharp}:{\cal O}_Y\rightarrow {\cal O}_X^{A\!z}$
   of gluing systems of $C^{\infty}$-admissible ring-homomorphisms over ${\Bbb R}\subset {\Bbb C}$   
   as in the Lemma,
  for $x\in X$ there exists a neighborhood $U$ of $x$ such that
  $\varphi(U)$ is contained in an open ball $V\subset Y$
  diffeomorphic to ${\Bbb R}^n$  with coordinates $(y^1,\,\cdots\,, y^n)$.
 By shrinking $U$ if necessary, we may assume that
  ${\cal E}_U$ is trivial and trivialized by ${\cal E}|_U\simeq {\cal O}_U^{\,\Bbb C}$.
 Then, locally over $U$ there is a surjection $C^{\infty}$-ring-homomorphism
  $$
     \frac{C^{\infty}(U\times V)}
	    {(\determinant(y^1\cdot\Id_r-\varphi^{\sharp}(y^1))\,,\, \cdots\,,\,
		        \determinant(y^n\cdot\Id_r-\varphi^{\sharp}(y^n))  )}      \;\;\;
      \longrightaarrow\;\;\; 	 C^{\infty}(U_{\varphi})\,.
  $$
 Here $\Id_r$ is the $r\times r$ identity matrix in $M_{r\times r}(C^{\infty}(U))$.
 Note that
   $\determinant(y^i\cdot\Id_r-\varphi^{\sharp}(y^i))
      \in C^{\infty}(U)[y^1,\,\cdots\,,y^n]$ for $i=1,\,\ldots\,, n$.
 While the two rings
    \begin{eqnarray*}
	  \lefteqn{
         \frac{C^{\infty}(U\times V)}
	    {(\determinant(y^1\cdot\Id_r-\varphi^{\sharp}(y^1))\,,\, \cdots\,,\,
		        \determinant(y^n\cdot\Id_r-\varphi^{\sharp}(y^n))  )}      }\\[1.2ex]
       && \mbox{versus}\hspace{2em}
               \frac{C^{\infty}(U)[y^1,\,\cdots\,y^n]}
	           {(\determinant(y^1\cdot\Id_r-\varphi^{\sharp}(y^1))\,,\, \cdots\,,\,
		                 \determinant(y^n\cdot\Id_r-\varphi^{\sharp}(y^n))  )}
	\end{eqnarray*}
   may not be isomorphic in general,
 it follows from the Malgrange Division Theorem ([Ma]; see also [Br]) and an induction on $n$
   that they are isomorphic after passing to their respective  $C^{\infty}$-ring of germs over $x$
  (and, hence, are also isomorphic after passing to the formal neighborhood of $x\in U$).
 Lemma~3.1 can be proved also from this aspect.
}\end{sremark}

\bigskip

\begin{flushleft}
{\bf A word on synthetic/$C^{\infty}$-algebraic symplectic/calibrated geometry}
\end{flushleft}
\begin{sremark} {$[\,$synthetic/$C^{\infty}$-algebraic symplectic/calibrated geometry$\,]$.} {\rm  
 The lemma thus directs us to the following guiding question:
 \begin{itemize}
  \item[{\bf Q.}]  \parbox[t]{14.6cm}{\it
     How should one enhance the current setting/notion of symplectic geometry and calibrated geometry
     so that near-point determined $C^{\infty}$subschemes are naturally incorporated into it?
     What is the notion of Fukaya(-Seidel) category in such an  enhanced symplectic geometry
	 and calibrated geometry?}
 \end{itemize}
 (Cf.\ [Joy2], [Se1], [Se2].)
 This leads us to
 the new topic in {\it synthetic/$C^{\infty}$-algebraic symplectic geometry and calibrated geometry}.
 {\sc Figure}~3-1; cf.\ [L-Y3: Sec.\ 7.2] (D(11.1)).
 %
 
 \begin{figure} [htbp]
  \bigskip
  \centering
  \includegraphics[width=0.80\textwidth]{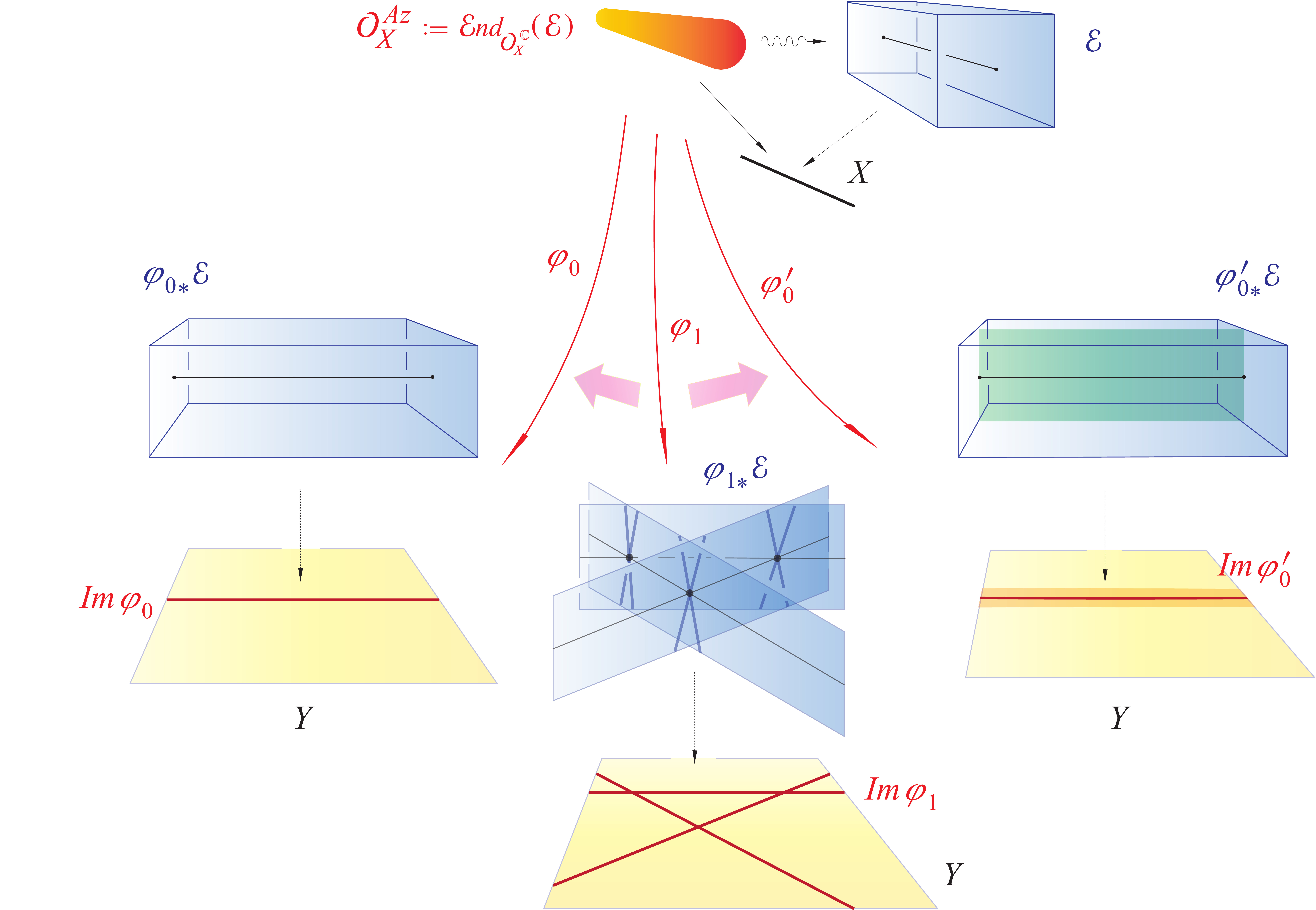}
 
  \bigskip
  \bigskip
  \centerline{\parbox{13cm}{\small\baselineskip 12pt
   {\sc Figure}~3-1.
   In a natural setting of the notion of a `special Lagrangian map' for the current  setting,
    a special Lagrangian map $\varphi$
    	from an Azumaya/matrix manifold with a fundamental module
 	      $(X^{\!A\!z},{\cal E})$  to a Calabi-Yau space $Y$
	  can have image $\Image\varphi$
	    not only some usual special Lagrangian submanifolds (possibly with singularities)
	  (cf.\ $\varphi_0$ and $\varphi_1$)
	   but also ``fuzzy" ones that carry some nilpotent structures (cf.\ $\varphi_0^{\prime}$). 	   
   Such special Lagrangian maps can deform among themselves as well
      (cf.\ $\varphi_1\Rightarrow \varphi_0$  and $\varphi_1\Rightarrow\varphi_0^{\prime}$).
   Indicated in the illustration are also
    the corresponding $\varphi_{\ast}{\cal E}$ with an associated filtration.
   {From} the target-space aspect, this suggests a notion of scheme-theoretic-like deformations
     of Lagrangian cycles with a generically flat sheaf/local system with singularities.  	
  This leads to the notion of a synthetic/$C^{\infty}$-algebraic symplectic or calibrated geometry.  
       }}
  \bigskip
 \end{figure}	 
}\end{sremark}

\newpage
\baselineskip 13pt
{\footnotesize

\vspace{1em}

\noindent
chienhao.liu@gmail.com, chienliu@math.harvard.edu; \\
yau@math.harvard.edu

}

\end{document}